\definecolor{lightgray}{gray}{0.9}
\newcommand{\multiline}[1]{%
  \begin{tabularx}{\dimexpr\linewidth-\ALG@thistlm}[t]{@{}X@{}}
    #1
  \end{tabularx}
}
\newcounter{HALG@line}
\renewcommand{\theHALG@line}{\thealgorithm.\arabic{ALG@line}}
  \let\mytitle\@firstofone
\providecommand{\keywords}[1]{\textit{Keywords:} #1}
\newcommand{\T}{\mathsf{\scriptscriptstyle T}} %
\newcommand{\R}{\mathbb{R}}
\newcommand{\Z}{\mathbb{Z}}
\newcommand{\B}{\mathbb{B}}
\newcommand{\Q}{\mathbb{Q}}
\newcommand{\mb}[1]{\mathbb{#1}}
\newcommand{\tsc}[1]{\textsc{#1}}
\newcommand{\mc}[1]{\mathcal{#1}}
\newtheoremstyle{mystyle}%
{\topsep}{\topsep}
{\itshape}{}
{\bfseries}{}
{0.5em}
{\thmname{\@ifempty{#3}{#1}\@ifnotempty{#3}{#3}}}
\theoremstyle{mystyle}
\theoremstyle{plain}
\newtheorem{theorem}{Theorem}
\newtheorem{proposition}{Proposition}
\newtheorem{corollary}{Corollary}
\newtheorem{lemma}{Lemma}
\newtheorem{claim}{Claim}
\newtheorem{fact}{Fact}
\theoremstyle{definition}
\newtheorem{definition}{Definition}
\newtheorem{example}{Example}
\newcommand{\routes}{\hyperlink{def:routes}{\mc{R}}}
\newcommand{\vrpprob}[1]{\hyperlink{problem:vrp}{\tsc{vrp-prob}(#1)}}
\NewDocumentCommand{\fcvrp}{g}{
  \hyperlink{def:fcvrp}{
    f_{\tsc{cvrp}}\IfValueT{#1}{(#1)}
  }
}
\NewDocumentCommand{\f}{g}{
  \hyperref[def:rhs_function]{
    f\IfValueT{#1}{(#1)}
  }
}
\newcommand{\vrpform}[1]{\hyperref[form:cvrp]{\tsc{vrp-form}(#1)}}
\NewDocumentCommand{\fbrp}{g}{
  \hyperlink{def:fbrp}{
    f_{\tsc{brp}}\IfValueT{#1}{(#1)}
  }
}
\newcommand{\routesbrp}{\hyperlink{def:routes_brp}{\mc{R}_{\tsc{brp}}}}
\renewcommand{\P}[1]{\hyperref[def:polytope]{\mc{P}(#1)}}
\newcommand{\allforests}{\hyperlink{def:omega}{\Omega}}
\newcommand{\M}[1]{\hyperref[def:minimal_infeasible]{\mc{M}(#1)}}
\NewDocumentCommand{\lb}{m g}{
  \hyperref[def:lower_bound]{
    \ell_{#1}\IfValueT{#2}{(#2)}
  }
}
\renewcommand{\B}[1]{\hyperref[def:lower_bound]{\mc{B}(#1)}}
\NewDocumentCommand{\Phiset}{g}{
  \hyperref[def:phi_f]{
    \Phi\IfValueT{#1}{(#1)}
  }%
}
\NewDocumentCommand{\ub}{m g}{
  \hyperref[def:upper_bound]{
    u_{#1}\IfValueT{#2}{(#2)}
  }
}
\newcommand{\Qpath}{\hyperref[example:vrp]{\mc{Q}_{\tsc{path}}}}
\newcommand{\Cpath}{\hyperref[example:vrp]{\mc{C}_{\tsc{path}}}}
\newcommand{\Ft}[1]{\hyperref[example:vrp]{\mc{F}(#1)}}
\newcommand{\propstar}{\hyperref[item:path_star]{$(\star)$}}
\NewDocumentCommand{\Thetaset}{g}{
  \hyperref[def:theta]{
    \Theta\IfValueT{#1}{(#1)}
  }%
}
\NewDocumentCommand{\fg}{m g}{
  \hyperref[proposition:subadditive_rhs_function]{
    f_{#1}\IfValueT{#2}{(#2)}
  }%
}
\newcommand{\U}{\hyperlink{def:uncertainty_set}{\mc{U}}}
\newtheoremstyle{named}{}{}{}{}{\bfseries}{.}{.5em}{#1 #3}
\theoremstyle{named}
\begin{document}
\title{Representability of forests via generalized subtour elimination constraints}

\author[1]{Matheus J. Ota\thanks{(mjota@uwaterloo.ca)}}
\affil[1]{University of Waterloo, Waterloo, Ontario, Canada}

\maketitle

\begin{abstract}
Generalized subtour elimination constraints (GSECs) are widely used in state-of-the-art exact algorithms for vehicle routing and network design problems, as their right-hand sides often capture problem-specific feasibility conditions of each solution component. In this work, we present the first characterization of the families of forests that can be represented as the integer points inside a polytope defined by GSECs. This result generalizes a recent framework developed for vehicle routing problems under uncertainty and broadens the applicability of GSEC-based formulations to a wider class of combinatorial problems. In particular, using our characterization, we recover vehicle routing formulations that could not be obtained with previous results. Additionally, we show that GSECs can naturally model a robust variant of the capacitated minimum spanning tree problem.
\end{abstract}

\noindent
\keywords{representability, generalized subtour elimination constraints, network design, vehicle routing, branch-and-cut}

\section{Introduction}
\label{section:intro}

Several exact algorithms for network design and vehicle routing problems (VRPs) use edge-based integer programming (IP) formulations with \emph{generalized subtour elimination constraints} (GSECs)~\cite{TothV02, hall1996, dell2014bike, gounaris2013robust, Dinh2018, ghosal2020, ghosal2024, laporte1986generalized, laporte1983branch}. In these formulations, the right-hand side (RHS) values of the GSECs encode the problem-specific feasibility conditions of each component of a solution. Several of these problems are then solved using roughly the same branch-and-cut algorithm, with the only modification being in the RHS coefficients of the separated GSECs~\cite{TothV02, ghosal2024, Dinh2018, gounaris2013robust, dell2014bike}.

In the context of vehicle routing problems with stochastic demands (VRPSDs), Ghosal et al.~\cite{ghosal2024} recently proposed a framework that provides \emph{sufficient conditions} under which a set of feasible routes can be modeled using GSECs. More precisely, they have shown that a branch-and-cut approach based on GSECs can be applied whenever the set of feasible routes is both \emph{downward closed} and \emph{permutation invariant}. Using these conditions, they presented a unified argument for several existing exact algorithms for VRPSDs, including the robust VRPSD~\cite{gounaris2013robust}, the chance-constrained VRPSD~\cite{Dinh2018}, and the distributionally robust VRPSD~\cite{ghosal2020}. However, their framework does not provide a full characterization, as they also prove that these conditions are \emph{not necessary}. Indeed, as we show later in Section~\ref{section:setup}, the set of feasible routes for the bike sharing rebalancing problem (BRP) is not permutation invariant, but Dell'Amico et al.~\cite{dell2014bike} still successfully model this problem using GSECs.

To better understand the modeling power of GSECs, we abstract away the degree constraints in VRP formulations and we focus on identifying the families of forests that can be represented solely with GSECs. As a result of this study, we generalize and extend the framework of Ghosal et al.~\cite{ghosal2024} in several ways. Specifically, we make the following contributions.
\begin{itemize}
    \item We establish the first characterization (sufficient and necessary conditions) for determining whether a given family of forests can be represented using only GSECs (Section~\ref{subsection:blocking_property}). Additionally, when the conditions of this characterization are satisfied, we precisely identify the set of RHS values that can be used in the GSECs to represent the given family of forests (Section~\ref{subsection:rhs}).
    \item Based on this result, we also characterize the families of forests that can be represented using GSEC together with some additional constraints (Section~\ref{section:extension}). This result generalizes the conditions of Ghosal et al.~\cite{ghosal2024}, since it implies that a certain \emph{minimal infeasibility property} is a weaker condition than permutation invariance, yet it is still sufficient to model VRPs using GSECs.
    \item We provide a simplified approach to verify whether certain families of forests satisfy our conditions, and applying this method, we obtain GSEC-based formulations that could not be recovered with the framework of Ghosal et al.~\cite{ghosal2024} (Section~\ref{section:applications}). In particular, we recover the BRP formulation of Dell'Amico et al.~\cite{dell2014bike}, and we derive a GSEC-based formulation for a robust variant of the capacitated minimum spanning tree (CMST) problem~\cite{hall1992polyhedral, hall1996, uchoa2008robust}.
\end{itemize}

\noindent
Overall, these results highlight the potential of GSECs as a versatile modeling tool for a broader class of combinatorial optimization problems, extending the framework of Ghosal et al.~\cite{ghosal2024} beyond VRPSDs and even VRPs. \\

\noindent
\textbf{Notation:} Let~$G = (V, E)$ be an undirected graph. For each~$v \in V$,~$\delta(v)$ denotes the set of edges incident to~$v$ (or~$\delta_G(v)$ when the graph must be specified). For each~$S \subseteq V$,~$E(S)$ refers to the set of edges with both endpoints in~$S$. The notation~$H \subseteq G$ indicates that~$H$ is a subgraph of~$G$. If~$C_1, \ldots, C_t \subseteq G$ are the (connected) components of~$H$, then we may write~$H = \{C_1, \ldots, C_t\}$. In particular, if~$H$ is a forest,~$t = |H| = |V(H)| - |E(H)|$ (if~$H$ is the empty graph, then~$|H| = 0$). We represent any path~$P \subseteq G$ by a tuple of vertices~$(v_1, \ldots, v_t)$, that is, if~$P = (v_1, \ldots, v_t) \subseteq G$, then~$E(P) = \{v_i v_{i+1} : i \in [t-1]\}$ (we assume~$[0] = \emptyset$). Paths are always assumed to be simple, and~$P = (v_1, \ldots, v_t) = (v_t, \ldots, v_1)$.

For any vector (respectively, function)~$g$, we use~$g_i$ and~$g(i)$ interchangeably, and for any subset~$U$ of its indices (respectively, domain), we define~$g(U) \coloneqq \sum_{i \in U} g(i)$. For any vector~$x \in \R^E$ and~$S \subseteq V$, we use the notation~$x(S) \coloneqq x(E(S)) = \sum_{e \in E(S)} x_e$. Moreover, for each~$E' \subseteq E$, ~$x|_{E'}$ refers to the restriction (or projection) of~$x$ onto~$\R^{E'}$. Finally, for any~$H \subseteq G$, the vector~$\mathbbm{1}_H \in \R^E$ denotes the characteristic vector of~$E(H)$.

\section{Generalized subtour elimination constraints and edge-based formulations for vehicle routing problems}
\label{section:setup}

We begin our discussion with the definition of a generic class of VRPs. We are given a complete undirected graph~$G_0 = (V_0, E_0)$ with edge costs~$c \in \Q^{E_0}_+$. The set of vertices~$V_0 = \{0\} \dot\cup V$ contains a special vertex denoted~$0 \in V_0$ representing the \emph{depot}, while the remaining vertices in~$V$ represent the \emph{customers}. We have~$k \in \Z_{++}$ available vehicles, all located initially at the depot. For convenience, we also set~$G = G_0 - \{0\} = (V, E)$, that is,~$G$ is the graph obtained from~$G_0$ by removing the depot.

Let \hypertarget{def:routes}{$\mc{R}$} be a family of feasible paths (or routes) in~$G$. We define problem \hypertarget{problem:vrp}{$\tsc{vrp-prob}(\mc{R})$} as follows. Let~$\mc{S} = \{C_1, \ldots, C_k\}$ be a set of~$k$ (simple) cycles, where each~$C_i \subseteq G_0$ contains the depot. Let~$P_i = C_i - \{0\} \subseteq G$ be the path obtained from~$C_i$ by deleting the depot. We say that~$\mc{S}$ is \emph{feasible} for~$\vrpprob{\mc{R}}$ if each~$P_i$ belongs to~$\routes$ and~$\{V(P_i)\}_{i \in [k]}$ forms a partition of~$V$. The objective of~$\vrpprob{\mc{R}}$ is to find a feasible solution~$\mc{S} = \{C_1, \ldots, C_k\}$ that minimizes the edge costs~$\sum_{i \in [k]} c(E_0(C_i))$.

All the VRPs mentioned in the introduction~\cite{laporte1983branch, gounaris2013robust, Dinh2018, ghosal2020, ghosal2024, dell2014bike} can be expressed as problems in the form of~$\vrpprob{\mc{R}}$.%
\footnote{
Some of these VRPs may allow solutions that use at most~$k$ cycles, and our reasoning extends naturally to this case. On the other hand, we do not address here VRPs that allow solutions visiting customers more than once, such as the VRPs with \emph{splittable demands}~\cite{archetti2008split}.}
For instance, the classical capacitated vehicle routing problem (CVRP) corresponds to problem~$\vrpprob{\mc{R}_{\tsc{cvrp}}}$, where~$\mc{R}_{\tsc{cvrp}} = \{\text{path~$P \subseteq G$} : d(V(P)) \leq C\}$,~$d \in \Q^V_+$ is a vector of customer demands, and~$Q \in \Q_+$ is the vehicle capacity. 

For any subset~$\emptyset \subsetneq S \subseteq V$, define \hypertarget{def:fcvrp}{$f_{\textsc{cvrp}}(S) \coloneqq \max\{1, \lceil d(S)/Q \rceil\}$}. A well-known CVRP formulation due to Laporte and Nobert~\cite{laporte1983branch} is as follows:
\begin{subequations}
\label{form:cvrp}
\begin{align}
    \tsc{vrp-form}(\fcvrp) \quad \quad \quad \min~~ & c^\T x \\
    \text{s.t.~~} & x(\delta_{G_0}(0)) = 2k, \label{eq:cvrp_depot} \\
    & x(\delta_{G_0}(v)) = 2, & \forall v \in V \label{eq:cvrp_customers} \\
    & x(S) \leq |S| - \fcvrp{S}, & \forall \emptyset \subsetneq S \subseteq V, \label{ineq:cvrp_gsec} \\
    & x_e \leq 1 + \mb{I}(0 \in e), & \forall e \in E_0, \\
    & x \in \Z^{E_0}_+,
\end{align}
\end{subequations}
where~$\mb{I}(\,\cdot\,)$ denotes the indicator function. Inequalities~\eqref{ineq:cvrp_gsec} are named GSECs, since they reduce to the standard \emph{subtour elimination constraints} (SECs) whenever~$\fcvrp{S} = 1$, for all~$\emptyset \subsetneq S \subseteq V$ (see~\cite{TothV02, laporte1986generalized}).

To associate Formulation~\eqref{form:cvrp} with different VRPs, we introduce the following definition.
\begin{definition}
    \label{def:rhs_function}
    A function~$f : 2^{V} \to \Z_+$ is a \emph{RHS function} if~$f(S) \in \{1, \ldots, |S|\}$, for every~$\emptyset \subsetneq S \subseteq V$. For convenience, the function~$f$ also satisfies~$f(\emptyset) = 0$.
\end{definition}
IP formulations for many VRPSD variants~\cite{gounaris2013robust, Dinh2018, ghosal2020, ghosal2024} can be obtained from Formulation~\eqref{form:cvrp} by just replacing the RHS function~$\fcvrp$.%
\footnote{Certain formulations instead use inequalities~$x(\delta_{G_0}(S)) \geq 2 \f{S}$. These inequalities can be shown to be equivalent to the GSECs by summing the degree constraints~\eqref{eq:cvrp_customers} over all~$v \in S$. When the formulation is instead defined on a complete directed graph~$D = (\{0\}\cup V, A)$ (with~$x \in \R^A$), the analogous inequalities~$x(\delta_D^+(S)) \geq \f{S}$ are also equivalent to the GSECs~$x(S) \le |S| - \f{S}$.}
The framework of Ghosal et al.~\cite{ghosal2024} partially explains this phenomenon, as they establish that~$\vrpprob{\routes}$ can be solved with~$\vrpform{f}$ (for some RHS function~$\f$) whenever the family of feasible paths~$\routes$ contains every path with at most one vertex and is both \emph{downward closed} and \emph{permutation invariant}, which we formally define next. 

For convenience in the following sections, we present these properties with respect to a general family of subgraphs~$\mc{H}$, rather than the family of feasible paths~$\routes$. Since general graphs cannot be written as tuples (as in the case of paths), we replace the term permutation invariance with \emph{vertex-consistency}.

\begin{definition}
    \label{def:downward_closed}
    A family of subgraphs~$\mc{H}$ of~$G$ is \emph{downward closed} if, for every~$F \in \mc{H}$ and~$F' \subseteq F$, we have that~$F' \in \mc{H}$.
\end{definition}

\begin{definition}
    \label{def:vertex_consistency}
    A family of subgraphs~$\mc{H}$ of~$G$ is \emph{vertex-consistent} if, for every~$F, F' \subseteq G$ with~$V(F) = V(F')$, we have that~$F \in \mc{H}$ if and only if~$F' \in \mc{H}$.
\end{definition}

\noindent
In fact, when applied to the family of feasible paths~$\routes$, the downward closedness property of Ghosal et al.~\cite{ghosal2024} is stronger than Definition~\ref{def:downward_closed}, as it states that if~$P = (v_1, \ldots, v_\ell) \in \routes$, then~$P' = (v_{i_1}, \ldots, v_{i_t}) \in \routes$, for every~$1 \leq i_1 < \ldots < i_t \leq \ell$. One can verify, however, that if~$\routes$ satisfies Definition~\ref{def:vertex_consistency}, then the two properties are equivalent.

As pointed out in Section~\ref{section:intro}, although the framework of Ghosal et al.~\cite{ghosal2024} unifies several VRPSD variants, it does not capture the GSEC-based formulation of Dell’Amico et al.~\cite{dell2014bike} for the BRP (or the 1-commodity pickup-and-delivery TSP formulation of~\cite{hernandez2003one}). In this problem, denoted~$\vrpprob{\mc{R}_{\tsc{brp}}}$, the vehicle load represents bikes, while the demands correspond to the number of bikes that must be picked up or delivered at each station. Vehicles are located at a central depot and start their routes with an initial load between~$0$ and~$Q \in \Q_+$. Customer demands~$d_v$ can be positive or negative (with~$|d_v| \leq Q$), and the accumulated load along a route must always remain within the interval~$[0, Q]$. Dell'Amico et al.~\cite{dell2014bike} show that the BRP can be formulated as~$\vrpform{f_{\tsc{brp}}}$, where, for every~$\emptyset \subsetneq S \subseteq V$, \hypertarget{def:fbrp}{$f_{\tsc{brp}}(S) \coloneqq \max\{1, \lceil |d(S)| / Q \rceil\}$}.

Formally, a path~$P = (v_1, \ldots, v_\ell)$ belongs to \hypertarget{def:routes_brp}{$\mc{R}_{\tsc{brp}}$} if and only if there exists an initial load~$q \in [0, Q] \cap \Q$ such that~$0 \leq q + \sum_{j \in [i]} d_{v_i} \leq Q$, for all~$i \in [\ell]$. Or equivalently (see Proposition~\ref{prop:brp}), if there exists~$q'$ such that~$0 \leq q' + \sum_{j = i}^\ell d_{v_i} \leq Q$, for all~$i \in [\ell]$. The following simple example shows that~$\routesbrp$ may not be vertex-consistent.

\begin{example}
\label{example:brp}
Suppose that~$Q = 1$,~$k = 1$ and we only have three customers~$v_1$,~$v_2$ and~$v_3$, with demands~$d_{v_1} = 1$,~$d_{v_2} = 1$ and~$d_{v_3} = -1$. Consider a route that starts at the depot and visits customers~$v_1, v_2, v_3$, in this order. This route corresponds to path~$P = (v_1, v_2, v_3)$, which does not belong to~$\routesbrp$, since~$d_{v_1} + d_{v_2} = 2 > 1 = Q$. On the other hand,~$P' = (v_1, v_3, v_2) \in \routesbrp$, as the vehicle can leave the depot with zero initial load and the accumulated load stays within~$[0, Q]$ at all times.~\qed
\end{example}

Example~\ref{example:brp} raises the question of which problems of the form~$\vrpprob{\mc{R}}$ can be modeled using GSECs but are not captured by the sufficient conditions of Ghosal et al.~\cite{ghosal2024}. To investigate this further, in Section~\ref{section:representation}, we drop the degree constraints~\eqref{eq:cvrp_depot} and~\eqref{eq:cvrp_customers} from Formulation~\eqref{form:cvrp}, and we characterize the forests that can be represented solely with the GSECs and the edge upper-bound constraints. Once these tools are developed, we reintroduce the degree constraints in Section~\ref{section:extension}.

\section{Representable families of forests}
\label{section:representation}

From now on, we fix~$G = (V, E)$ to be an arbitrary undirected graph (which may not be the same as the graph~$G_0 - \{0\}$ from Section~\ref{section:setup}). For any RHS function~$\f$, we define the polytope
\begin{equation}
    \label{def:polytope}
    \tag{$\mc{P}(f ; G)$}
    \mc{P}(f ; G) \coloneqq \{x \in [0, 1]^{E} : x(S) \leq |S| - \f{S},~~\forall \emptyset \subsetneq S \subseteq V\}.
\end{equation}
Since the graph~$G$ is fixed, we sometimes omit the dependence on~$G$ from the notation. In particular, we write~$\P{f}$ instead of~$\P{f; G}$. Additionally, to avoid repeating ourselves, whenever we write~$\P{f}$, it is implicitly assumed that~$\f$ is a RHS function.

The GSECs imply the SECs, so the integer vectors inside~$\P{f}$ correspond to forests in~$G$ (for any RHS function~$\f$). We use \hypertarget{def:omega}{$\allforests$} to denote the family of all forests in~$G$. The notion of \emph{representability} is formalized as follows.
\begin{definition}
    \label{def:representation}
    Let~$\mc{F}$ be a family of forests in~$G$ and let~$\mc{P} \subseteq \R^E$.
    We say that~$\mc{P}$ \emph{represents}~$\mc{F}$ if~$\mc{P} \cap \Z^{E} = \{\mathbbm{1}_{F} : F \in \mc{F}\}$. Furthermore,~$\mc{F}$ is \emph{representable} if there exists a RHS function~$\f$ such that~$\P{f}$ represents~$\mc{F}$.
\end{definition}

Definition~\ref{def:representation} identifies each forest in~$\mc{F}$ with its edge set. However, different forests may share the same edge set, as they might differ only by a set of isolated vertices. Consequently, the same set~$\P{f}$ may represent two distinct families of forests~$\mc{F}$ and~$\mc{F}'$, as long as their incidence vectors coincide. In this way, we often focus on \emph{edge-consistent} families of forests:
\begin{definition}
    \label{def:edge-consistency}
    A family of forests~$\mc{F}$ is \emph{edge-consistent} if, for every pair of forests~$F$ and~$F'$ in~$G$ with~$E(F) = E(F')$, we have that~$F \in \mc{F}$ if and only if~$F' \in \mc{F}$.
\end{definition}

The assumption of edge-consistency is without loss of generality: given any family of forests~$\mc{F}'$, one can always construct a unique edge-consistent family~$\mc{F}$ such that~$\{\mathbbm{1}_{F} : F \in \mc{F}\} = \{\mathbbm{1}_{F} : F \in \mc{F}'\}$. Furthermore, Definition~\ref{def:edge-consistency} is convenient for how we express our characterization, since we associate each forest~$F \in \mc{F}$ with both its subgraphs (see Fact~\ref{fact:downward}) and its vertex set~$V(F)$ (see Definition~\ref{def:lower_bound}). Definition~\ref{def:edge-consistency} thus ensures that forests with identical edge sets but different vertex sets are treated in the same way.

\subsection{The main characterization}
\label{subsection:blocking_property}

We start by deriving necessary conditions for an edge-consistent family of forests~$\mc{F}$ to be representable. Thus, let us assume for the moment that~$\P{f}$ represents~$\mc{F}$.

By the definition of RHS functions (Definition~\ref{def:rhs_function}),~$\P{f}$ contains the origin, so we immediately obtain the following fact.
\begin{fact}
    \label{fact:trivial_forest}
    If~$\mc{F}$ is an edge-consistent representable family of forests, then~$\mc{F}$ contains all the forests with no edges (including the empty graph).
\end{fact}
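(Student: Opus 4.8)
The plan is to exploit the fact that the origin always lies in $\P{f}$, and then to invoke edge-consistency. First, observe that since $\f$ is a RHS function, Definition~\ref{def:rhs_function} gives $\f{S} \le |S|$ for every $\emptyset \subsetneq S \subseteq V$, so each right-hand side $|S| - \f{S}$ is nonnegative. Evaluating the GSECs at the zero vector $\mathbf{0} \in \R^{E}$ thus yields $0 \le |S| - \f{S}$ for all such $S$, and clearly $\mathbf{0} \in [0,1]^{E}$; hence $\mathbf{0} \in \P{f}$.

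Next, since $\mathbf{0}$ is integral, representability gives $\mathbf{0} \in \P{f} \cap \Z^{E} = \{\mathbbm{1}_{F} : F \in \mc{F}\}$, so $\mathbf{0} = \mathbbm{1}_{F}$ for some $F \in \mc{F}$. As $\mathbbm{1}_{F}$ is the characteristic vector of $E(F)$, this forces $E(F) = \emptyset$; that is, $\mc{F}$ contains at least one edgeless forest.

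Finally, I would promote this single edgeless forest to all of them via edge-consistency: every forest $F'$ in $G$ with no edges satisfies $E(F') = \emptyset = E(F)$, so Definition~\ref{def:edge-consistency} together with $F \in \mc{F}$ yields $F' \in \mc{F}$. Since the forests with no edges are precisely the empty graph and the graphs consisting solely of isolated vertices, this establishes that $\mc{F}$ contains every edgeless forest.

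There is no substantive obstacle here; the only points requiring attention are that the nonnegativity of the GSEC right-hand side is exactly the content of the bound $\f{S} \le |S|$ in the definition of a RHS function, and that edge-consistency is precisely what lets us pass from a single edgeless witness to the whole family of edgeless forests.
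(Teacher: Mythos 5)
Your proposal is correct and follows exactly the paper's (very terse) argument: the paper observes that the origin lies in $\mc{P}(f)$ by the definition of RHS functions and treats the rest---representability forcing an edgeless witness, and edge-consistency promoting it to all edgeless forests---as immediate. You have simply spelled out those implicit steps in full detail.
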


\noindent
Another simple observation is that if~$x \in \P{f}$, then every~$y \leq x$ (componentwise) also belongs to~$\P{f}$, so~$\mc{F}$ must be downward closed (Definition~\ref{def:downward_closed}):
\begin{fact}
    \label{fact:downward}
    If~$\mc{F}$ is an edge-consistent representable family of forests, then~$\mc{F}$ is downward closed.
\end{fact}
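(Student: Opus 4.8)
The plan is to prove Fact~\ref{fact:downward} directly from the geometric structure of the polytope $\P{f}$, using the monotonicity of the GSECs. The key observation is that every constraint defining $\P{f}$ is of the form $x(S) \leq |S| - \f{S}$ together with the box constraints $x \in [0,1]^E$, and all of these inequalities have \emph{nonnegative} coefficients on the left-hand side. Such a system is downward monotone: if a point satisfies it, then so does any componentwise-smaller nonnegative point.

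First I would make this monotonicity precise. Suppose $x \in \P{f}$ and let $y \in \R^E$ satisfy $0 \leq y \leq x$ componentwise. For each $\emptyset \subsetneq S \subseteq V$ we have $y(S) = \sum_{e \in E(S)} y_e \leq \sum_{e \in E(S)} x_e = x(S) \leq |S| - \f{S}$, since each $y_e \leq x_e$ and the edge weights $y_e$ are nonnegative; likewise $0 \leq y_e \leq x_e \leq 1$ gives $y \in [0,1]^E$. Hence $y \in \P{f}$. In particular, this applies whenever $x = \mathbbm{1}_F$ and $y = \mathbbm{1}_{F'}$ for a subgraph $F' \subseteq F$, because $E(F') \subseteq E(F)$ forces $\mathbbm{1}_{F'} \leq \mathbbm{1}_F$.

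Next I would translate this back to the language of forests using the representability hypothesis. Since $\P{f}$ represents $\mc{F}$, we have $\P{f} \cap \Z^E = \{\mathbbm{1}_F : F \in \mc{F}\}$. Take any $F \in \mc{F}$ and any $F' \subseteq F$. Then $\mathbbm{1}_F \in \P{f}$, and since $\mathbbm{1}_{F'} \leq \mathbbm{1}_F$ with $\mathbbm{1}_{F'}$ integral, the monotonicity step yields $\mathbbm{1}_{F'} \in \P{f} \cap \Z^E$. By the representability equation, $\mathbbm{1}_{F'} = \mathbbm{1}_{F''}$ for some $F'' \in \mc{F}$, i.e.\ $F'$ has the same edge set as a member of $\mc{F}$. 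Edge-consistency (Definition~\ref{def:edge-consistency}) then guarantees $F' \in \mc{F}$ itself. This establishes exactly the downward-closedness condition of Definition~\ref{def:downward_closed}.

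I do not expect any serious obstacle here; the argument is essentially the observation already flagged in the text preceding the statement. The only point requiring a little care is the bookkeeping between incidence vectors and subgraphs: a subgraph $F' \subseteq F$ is itself a forest (as a subgraph of a forest), so $\mathbbm{1}_{F'}$ is a legitimate integer point, and the role of \emph{edge-consistency} is precisely to bridge the gap between ``$\mathbbm{1}_{F'}$ lies in the represented set'' and ``$F'$ belongs to $\mc{F}$,'' since $F'$ and the witnessing forest $F''$ may differ only by isolated vertices. Keeping the nonnegativity of the coefficients explicit is what makes the inequality-preservation step clean.
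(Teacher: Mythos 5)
Your proof is correct and follows exactly the paper's (one-line) argument: the paper justifies Fact~\ref{fact:downward} by the same monotonicity observation that $x \in \P{f}$ and $0 \le y \le x$ imply $y \in \P{f}$, and your use of edge-consistency to pass from ``$\mathbbm{1}_{F'}$ equals the incidence vector of some member of $\mc{F}$'' to ``$F' \in \mc{F}$'' is precisely the subtlety the paper flags in the remark immediately after the statement. Nothing further is needed.
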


\noindent
Note that if~$\mc{F}$ is representable but not edge-consistent, we may have that~$F \in \mc{F}$ while~$F' \subsetneq F$ does not belong to~$\mc{F}$ (but there exists~$F'' \in \mc{F}$ such that~$\mathbbm{1}_{F''} = \mathbbm{1}_{F'} \leq \mathbbm{1}_F$).

Fact~\ref{fact:downward} implies that feasible forests cannot contain \emph{minimal infeasible forests}, defined as follows. 

\begin{definition}
    \label{def:minimal_infeasible}
    Let~$\mc{F}$ be a family of forests. A forest~$F \subseteq G$ is \emph{minimal infeasible} with respect to~$\mc{F}$ if~$F \notin \mc{F}$ and every proper subgraph~$F' \subsetneq F$ belongs to~$\mc{F}$. The notation~$\mc{M}(\mc{F})$ denotes the set of all such minimal infeasible forests.
\end{definition}

Definition~\ref{def:minimal_infeasible} is key for our characterization of (edge-consistent) representable families of forests. For an intuition of why this is the case, consider Example~\ref{example:brp}: path~$(v_1, v_2, v_3)$ is infeasible but not minimal; on the other hand, path~$(v_1, v_2)$ is minimal infeasible, and no feasible solution can cover customers~$\{v_1, v_2\}$ using a single path (or route). This simple example illustrates that the forests in~$\M{\mc{F}}$ may impose lower bounds on the number of components that some feasible forests can have. Motivated by this observation, we introduce the following definitions. 

We remark that Definition~\ref{def:lower_bound} is stated with respect to a generic family of forests~$\mc{C} \subseteq \allforests$, as this will be convenient in later sections. However, for now, we always assume that~$\mc{C} = \allforests$, and in this case, we omit the~$\allforests$ for simplicity (so we write~$\ell_{\mc{F}}$ and~$\mc{B}(\mc{F})$ instead of~$\ell_{\mc{F}, \allforests}$ and~$\mc{B}(\mc{F}, \allforests)$).
\begin{definition}
    \label{def:lower_bound}
    Let~$\mc{F}$ and~$\mc{C}$ be two families of forests. Define~$\mc{B}(\mc{F}, \mc{C}) \coloneqq \{V(F) : F \in \M{\mc{F}} \cap \mc{C}\}$ and
    \begin{equation}
        \tag{$\ell_{\mc{F}, \mc{C}}$}
        \ell_{\mc{F}, \mc{C}}(S) \coloneqq 1 +
        \begin{cases}
            \max\{|F| : F \in \M{\mc{F}} \cap \mc{C}, V(F) = S\}, & \text{if~$S \in \mc{B}(\mc{F}, \mc{C})$,}\\
            0, & \text{otherwise.}
        \end{cases}
    \end{equation}
    for each~$\emptyset \subsetneq S \subseteq V$. For convenience,~$\ell_{\mc{F}, \mc{C}}(\emptyset) = 0$.
\end{definition}

\begin{definition}
\label{def:blocking_property}
    A family of forests~$\mc{F}$ has the \emph{minimal infeasibility property} if every~$F \in \mc{F}$ satisfies~$|F| \geq \lb{\mc{F}}{V(F)}$.
\end{definition}

\begin{lemma}
    \label{lemma:minimal_necessary}
    If~$\mc{F}$ is an edge-consistent representable family of forests, then~$\mc{F}$ has the minimal infeasibility property.
\end{lemma}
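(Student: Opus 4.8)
The plan is to work directly from the hypothesis that $\P{f}$ represents $\mc{F}$ for some RHS function $f$, and to verify the inequality $|F| \geq \lb{\mc{F}}{V(F)}$ for an arbitrary $F \in \mc{F}$. Write $S = V(F)$. If $S \notin \B{\mc{F}}$ then $\lb{\mc{F}}{S} = 1$ (or $0$ when $S = \emptyset$), and the bound is immediate, since any nonempty forest has at least one component. The substantive case is $S \in \B{\mc{F}}$: here I pick a minimal infeasible forest $F^* \in \M{\mc{F}}$ with $V(F^*) = S$ attaining the maximum in the definition of $\lb{\mc{F}}{S}$, so that $\lb{\mc{F}}{S} = |F^*| + 1$, and the goal becomes $|F| \geq |F^*| + 1$. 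Since $F$ and $F^*$ share the vertex set $S$, this is equivalent to the edge inequality $|E(F)| < |E(F^*)|$, because $|F| = |S| - |E(F)|$ and $|F^*| = |S| - |E(F^*)|$.

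The central idea is to produce a \emph{single} GSEC --- indexed by some $\emptyset \subsetneq T \subseteq V$ with $S \subseteq T$ --- that is violated by $\mathbbm{1}_{F^*}$ but satisfied by $\mathbbm{1}_F$, and then to read both sides as edge counts. Concretely, since $F^* \notin \mc{F}$, representability gives $\mathbbm{1}_{F^*} \notin \P{f}$; as $\mathbbm{1}_{F^*} \in [0,1]^E$ automatically, some GSEC must fail, i.e.\ there is a $T$ with $\mathbbm{1}_{F^*}(T) > |T| - f(T)$. Once I know that such a violating $T$ necessarily contains $S$, the claim follows: for this $T$ one has $\mathbbm{1}_{F^*}(T) = |E(F^*)|$ and $\mathbbm{1}_F(T) = |E(F)|$ (all edges of $F^*$ and of $F$ lie in $E(S) \subseteq E(T)$), while $\mathbbm{1}_F \in \P{f}$ forces $\mathbbm{1}_F(T) \leq |T| - f(T)$. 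Chaining these yields $|E(F)| \leq |T| - f(T) < |E(F^*)|$, exactly the desired edge inequality.

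Two auxiliary facts make the claim ``$S \subseteq T$ for every violating $T$'' work, and the second is where I expect the real difficulty. First, edge-consistency forces $F^*$ to have no isolated vertices: deleting such a vertex preserves the edge set, hence (by edge-consistency) preserves membership in $\mc{F}$, contradicting that $F^*$ is infeasible while all its proper subgraphs are feasible; thus $S = V(F^*)$ is precisely the set of endpoints of $E(F^*)$, and it suffices to show $E(F^*) \subseteq E(T)$. For this I argue from the minimality of $F^*$: if $A := E(F^*) \cap E(T)$ were a proper subset of $E(F^*)$, then the subforest $H \subsetneq F^*$ with edge set $A$ would be feasible, so $\mathbbm{1}_H \in \P{f}$, yet $\mathbbm{1}_H(T) = |A| = \mathbbm{1}_{F^*}(T) > |T| - f(T)$ would violate the same GSEC, a contradiction; hence $A = E(F^*)$ and $S \subseteq T$. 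This edge-minimality step is the crux, as it lets me reuse one violated constraint for both $F$ and $F^*$ without having to pin the violating set to $S$ itself --- which would otherwise demand a monotonicity of $f$ that a generic RHS function need not possess.
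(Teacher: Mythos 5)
Your proof is correct, and its architecture is genuinely different from the paper's. The paper argues by contradiction: it assumes some $F \in \mc{F}$ has $|F| \leq \lb{\mc{F}}{V(F)} - 1$, takes a minimal infeasible forest $H \in \M{\mc{F}}$ with $V(H) = V(F)$ and $|H| = \lb{\mc{F}}{V(F)} - 1$, and verifies by a three-case analysis (sets $S$ disjoint from, containing, or partially meeting $V(F)$) that $\mathbbm{1}_H$ satisfies \emph{every} GSEC, so $\mathbbm{1}_H \in \P{f} \cap \Z^E$, contradicting representability; the partial-intersection case is handled by deleting from $H$ the vertices outside $S$ and invoking minimality. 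You instead argue directly: you extract \emph{one} GSEC, indexed by $T$, violated by $\mathbbm{1}_{F^*}$, and prove a localization claim --- any violating $T$ must contain $V(F^*)$ --- by restricting $F^*$ to the edges inside $E(T)$ (minimality makes this restriction feasible, yet it would violate the same GSEC if it were proper) and by using edge-consistency to rule out isolated vertices of $F^*$; a single comparison $|E(F)| = \mathbbm{1}_F(T) \leq |T| - \f{T} < \mathbbm{1}_{F^*}(T) = |E(F^*)|$ then finishes. So where the paper restricts the infeasible witness by vertices and checks all constraints, you restrict it by edges and work with one violated constraint; your route also yields a sharper structural fact (a GSEC violated by a minimal infeasible forest with no isolated vertices must span its whole vertex set), which is essentially a pointwise version of the validity statement the paper proves later as item~\ref{item:weakest_relaxation1} of Lemma~\ref{lemma:weakest_relaxation}. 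One small elision to patch: the inference ``$F^* \notin \mc{F}$, so representability gives $\mathbbm{1}_{F^*} \notin \P{f}$'' already relies on edge-consistency (otherwise a feasible forest differing from $F^*$ only by isolated vertices could share its incidence vector, exactly the caveat the paper raises after Fact~\ref{fact:downward}); since edge-consistency is a hypothesis and you invoke it elsewhere, this is a one-sentence fix rather than a gap.
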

\begin{proof}
    Let~$\P{f}$ represent~$\mc{F}$ and suppose by contradiction that~$F \in \mc{F}$ is such that~$|F| \leq \lb{\mc{F}}{V(F)} - 1$. Since~$\lb{\mc{F}}{\emptyset} = 0$, we know that~$|F| \geq 1$, meaning that~$\lb{\mc{F}}{V(F)} \geq 2$. Definition~\ref{def:lower_bound} implies that there exists a forest~$H \in \M{\mc{F}}$ with~$V(H) = V(F)$ such that~$|H| = \lb{\mc{F}}{V(F)} - 1$. 
    
    By edge-consistency of~$\mc{F}$, no forest~$F' \in \mc{F}$ has the same edge set as~$H \notin \mc{F}$, so~$\mathbbm{1}_{H} \notin \{\mathbbm{1}_{F'} : F' \in \mc{F}\}$. We thus show the desired contradiction by proving that~$\mathbbm{1}_{H} \in \P{f} \cap \Z^E$. To do this, we use case analysis to verify that~$\mathbbm{1}_{H}$ satisfies the GSEC~$x(S) \leq |S| - \f{S}$, for every~$\emptyset \subsetneq S \subseteq V$.\\
    \noindent
    \textbf{Case~$|V(F) \cap S| = 0$:} Since~$V(H) = V(F)$, it follows from the definition of RHS functions  that~$0 = \mathbbm{1}_{H}(S) \leq |S| - \f{S}$. 

    \noindent
    \textbf{Case~$|V(F) \cap S| = |V(F)|$:} Since~$\P{f}$ represents~$\mc{F}$ and~$F \in \mc{F}$, we know that~$\mathbbm{1}_F \in \P{f}$. Hence,~$$\mathbbm{1}_{H}(S) = |V(H)| - (\lb{\mc{F}}{V(F)} - 1) \leq |V(F)| - |F| = \mathbbm{1}_{F}(S) \leq |S| - \f{S}.$$ 
    
    \noindent
    \textbf{Case~$0 < |V(F) \cap S| < |V(F)|$:} Let~$H'$ be the forest obtained by deleting from~$H$ all the vertices that are not in~$S$. By minimality of~$H$,~$H'$ belongs to~$\mc{F}$ and~$\mathbbm{1}_{H'} \in \P{f}$ (by representability). Hence,~$\mathbbm{1}_{H}(S) = \mathbbm{1}_{H'}(S) \leq |S| - \f{S}$.
\end{proof}

Although not used in our development, it is worth noting that, if~$\mc{F}$ satisfies the minimal infeasibility property, then all minimal infeasible forests spanning a given set of vertices have the same number of components.
\begin{claim}
    \label{claim:minimal_same_size}
    Let~$\mc{F}$ be a family of forests satisfying the minimal infeasibility property. Then, for every~$F, F' \in \M{\mc{F}}$ with~$V(F) = V(F')$, we have that~$|F| = |F'|$. 
\end{claim}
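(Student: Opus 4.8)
The plan is to prove something slightly stronger than stated: for any vertex set $S$, every minimal infeasible forest spanning $S$ attains the maximum appearing in the definition of $\lb{\mc{F}}{S}$, so in particular all such forests share the same number of components. Fix $S$ and set $m \coloneqq \max\{|G| : G \in \M{\mc{F}}, V(G) = S\}$, so that $\lb{\mc{F}}{S} = 1 + m$ by Definition~\ref{def:lower_bound} (the hypothesis supplies a minimal infeasible forest spanning $S$, so $S \in \B{\mc{F}}$ and this formula applies). It then suffices to show that every $H \in \M{\mc{F}}$ with $V(H) = S$ satisfies $|H| = m$; the bound $|H| \le m$ is immediate from the definition of $m$, so the content is the reverse inequality $|H| \ge m$.

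For the reverse bound I would argue by deleting a single edge. Suppose first that $H$ contains an edge $e$. Then $H - e$ is a proper subgraph of $H$, so by minimality (Definition~\ref{def:minimal_infeasible}) we have $H - e \in \mc{F}$; moreover, deleting an edge from a forest leaves the vertex set unchanged and increases the number of components by exactly one, so $V(H - e) = S$ and $|H - e| = |H| + 1$. Applying the minimal infeasibility property (Definition~\ref{def:blocking_property}) to $H - e \in \mc{F}$ yields $|H| + 1 = |H - e| \ge \lb{\mc{F}}{V(H - e)} = \lb{\mc{F}}{S} = 1 + m$, whence $|H| \ge m$.

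The only case requiring separate treatment, and the sole real subtlety, is when $H$ has no edges, since then there is no edge to delete. Here I would observe directly that an edgeless forest spanning $S$ has exactly $|S|$ components, while every forest spanning $S$ (in particular every $G \in \M{\mc{F}}$ with $V(G) = S$) has at most $|S|$ components; thus $|H| = |S| \ge m$, and the reverse bound holds trivially. Combining the two cases gives $|H| = m$ for every minimal infeasible forest spanning $S$, and specializing to $H = F$ and $H = F'$ (both spanning $V(F) = V(F')$) proves $|F| = |F'|$. I expect the edge-deletion identity $|H - e| = |H| + 1$ and the edgeless boundary case to be the only points needing care; the rest is a direct unfolding of Definitions~\ref{def:lower_bound} and~\ref{def:blocking_property}.
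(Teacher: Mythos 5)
Your proof is correct and rests on essentially the same mechanism as the paper's: delete an edge from a minimal infeasible forest, note that the resulting forest is feasible (by minimality) with exactly one more component, and then invoke the minimal infeasibility property to bound the component count from below. The paper packages this as a proof by contradiction — assuming $|F| < |F'|$, so that the forest $F$ with fewer components automatically contains an edge and your edgeless boundary case never arises — whereas you argue directly that every minimal infeasible forest spanning $S$ attains the maximum in Definition~\ref{def:lower_bound}, but this is a difference of presentation rather than of substance.
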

\begin{proof}
    Suppose by contradiction that~$F, F' \in \M{\mc{F}}$ are such that~$V(F) = V(F')$ and~$|F| < |F'|$. Since~$F$ and~$F'$ are minimal infeasible,~$F \subsetneq F'$ and~$F' \subsetneq F$, which implies that there exists~$e \in E(F) \setminus E(F')$. Let~$H$ be obtained from~$F$ by deleting edge~$e$, i.e.,~$V(H) = V(F)$ and~$E(H) = E(F) \setminus \{e\}$. By minimal infeasibility of~$F$,~$H$ is feasible. Moreover,~$|H| = |F| + 1 \leq |F'| \leq \lb{\mc{F}}{V(F)} - 1$, contradicting the minimal infeasibility property.
\end{proof}

Next, for any set function~$f : 2^V \to \R$, we define
\begin{equation}
    \label{def:phi_f}
    \tag{$\Phi(f)$}
    \Phi(f) \coloneqq \{F \in \allforests : |F'| \geq f(V(F')),~\forall F' \subseteq F \}. 
\end{equation}

\noindent
Using this notation, Definitions~\ref{def:downward_closed} and~\ref{def:blocking_property} can be concisely expressed as follows.
\begin{lemma}
    \label{lemma:phi_lower_bound}
    Let~$\mc{F}$ be a family of forests. Then~$\mc{F}$ is nonempty, downward closed and has the minimal infeasibility property if and only if~$\mc{F} = \Phiset{\lb{\mc{F}}}$.
\end{lemma}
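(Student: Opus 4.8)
The plan is to prove the two directions of the equivalence separately, first isolating the facts that hold for $\Phiset{f}$ regardless of the specific function $f$. For any set function $f$ with $f(\emptyset) = 0$, the family $\Phiset{f}$ is nonempty, because its membership condition for the empty forest $\emptyset$ reduces to checking $|\emptyset| = 0 \geq f(\emptyset) = 0$; and $\Phiset{f}$ is downward closed, because if $F \in \Phiset{f}$ and $\tilde{F} \subseteq F$, then every $F' \subseteq \tilde{F}$ is also a subgraph of $F$ (subgraph inclusion is transitive), so the defining inequalities are inherited. Since $\lb{\mc{F}}{\emptyset} = 0$ by Definition~\ref{def:lower_bound}, both observations apply with $f = \lb{\mc{F}}$.

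For the reverse implication I would assume $\mc{F} = \Phiset{\lb{\mc{F}}}$ and read off the three properties. Nonemptiness and downward closedness are exactly the two observations above. For the minimal infeasibility property (Definition~\ref{def:blocking_property}), I take any $F \in \mc{F} = \Phiset{\lb{\mc{F}}}$ and instantiate the defining condition of $\Phiset{\lb{\mc{F}}}$ with the subgraph $F' = F$ itself, which yields precisely $|F| \geq \lb{\mc{F}}{V(F)}$.

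For the forward implication I would establish the two inclusions. The inclusion $\mc{F} \subseteq \Phiset{\lb{\mc{F}}}$ is immediate: given $F \in \mc{F}$ and any $F' \subseteq F$, downward closedness gives $F' \in \mc{F}$, and the minimal infeasibility property applied to $F'$ gives $|F'| \geq \lb{\mc{F}}{V(F')}$, which is exactly the membership condition of $\Phiset{\lb{\mc{F}}}$.

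The harder inclusion $\Phiset{\lb{\mc{F}}} \subseteq \mc{F}$ is where I expect the main obstacle. I argue by contradiction: suppose $F \in \Phiset{\lb{\mc{F}}}$ but $F \notin \mc{F}$. Since $\mc{F}$ is nonempty and downward closed, it contains the empty forest, so among the finitely many infeasible subgraphs of $F$ there is one, say $H$, minimal with respect to subgraph inclusion; minimality forces every proper subgraph of $H$ into $\mc{F}$, hence $H \in \M{\mc{F}}$. Setting $S = V(H)$, Definition~\ref{def:lower_bound} gives $\lb{\mc{F}}{S} \geq |H| + 1$. I then apply the membership condition of $\Phiset{\lb{\mc{F}}}$ to the subforest $F'$ obtained from $F$ by deleting every vertex outside $S$, so that $V(F') = S$ and $F' \subseteq F$; this gives $|F'| \geq \lb{\mc{F}}{S} \geq |H| + 1$. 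The contradiction comes from counting components on the fixed vertex set $S$: since $H \subseteq F'$ and $V(H) = V(F') = S$, the forest $F'$ has at least as many edges as $H$, so $|F'| = |S| - |E(F')| \leq |S| - |E(H)| = |H|$, contradicting $|F'| \geq |H| + 1$. The two delicate points are selecting the correct test subgraph for the $\Phi$ condition, namely the induced subforest on $V(H)$ rather than $H$ itself, and the elementary fact that, on a fixed vertex set, the number of components of a forest equals $|V| - |E|$, so extra edges mean fewer components.
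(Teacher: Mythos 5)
Your proof is correct and follows essentially the same route as the paper's: the backward direction is read off the definition of~$\Phiset{\lb{\mc{F}}}$, and the forward direction is established via the two inclusions, with the hard inclusion resting on extracting a minimal infeasible subgraph~$H \in \M{\mc{F}}$ from an infeasible~$F$. The one deviation is your detour in that hard inclusion: contrary to your closing remark,~$H$ itself is a perfectly good test subgraph---by Definition~\ref{def:lower_bound} we have~$|H| \leq \lb{\mc{F}}{V(H)} - 1$, so the membership condition of~$\Phiset{\lb{\mc{F}}}$ already fails at~$F' = H$, which is exactly what the paper uses---so your passage to the induced subforest on~$V(H)$ and the edge-counting step are unnecessary (though not incorrect).
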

\begin{proof}
    By the definition of~$\Phiset$, it is clear that if~$\mc{F} = \Phiset{\lb{\mc{F}}}$, then~$\mc{F}$ is nonempty, downward closed and has the minimal infeasibility property (note that~$\Phiset{\lb{\mc{F}}}$ always contains the empty graph). To show the other direction, let~$F$ be an arbitrary forest in~$G$.

    Since~$\mc{F}$ is nonempty and downward closed, it follows that~$\emptyset \in \mc{F}$. Hence, whenever~$F \notin \mc{F}$ there exists~$F' \subseteq F$ such that~$F' \in \M{\mc{F}}$. By the definition of~$\lb{\mc{F}}$,~$|F'| \leq \lb{\mc{F}}{V(F')} - 1$, meaning that both~$F'$ and~$F$ do not belong to~$\Phiset{\lb{\mc{F}}}$. To show the other direction of the inclusion, suppose that~$F \in \mc{F}$. Downward closedness implies that every~$F' \subseteq F$ belongs to~$\mc{F}$, while the minimal infeasibility property gives~$|F'| \geq \lb{\mc{F}}{V(F')}$. This proves that~$F \in \Phiset{\lb{\mc{F}}}$.
\end{proof}

Moreover,~$\Phiset{f}$ is representable whenever it is edge-consistent (and~$f$ is a RHS function).
\begin{lemma}
    \label{lemma:phi_representable}
    Let~$\f$ be a RHS function and suppose that~$\Phiset{f}$ is edge-consistent. Then~$\P{f}$ represents~$\Phiset{f}$.
\end{lemma}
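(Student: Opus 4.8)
The plan is to establish the set equality~$\P{f} \cap \Z^E = \{\mathbbm{1}_F : F \in \Phiset{f}\}$ by proving the two inclusions separately. Throughout I will use the fact, already noted in the text, that every integer point of~$\P{f}$ is the characteristic vector~$\mathbbm{1}_H$ of a forest~$H$ (the GSECs imply the SECs because~$\f{S} \geq 1$), together with the observation that~$\Phiset{f}$ is downward closed: this is immediate from its definition, since the membership condition ``$|F'| \geq f(V(F'))$ for all~$F' \subseteq F$'' is inherited by every subgraph of~$F$.

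For the inclusion~$\P{f} \cap \Z^E \subseteq \{\mathbbm{1}_F : F \in \Phiset{f}\}$, I would take~$x \in \P{f} \cap \Z^E$, write~$x = \mathbbm{1}_H$ for the forest~$H$ with edge set~$\{e : x_e = 1\}$ and~$V(H) = V$, and verify directly that~$H \in \Phiset{f}$. Fixing any subgraph~$F' \subseteq H$ and setting~$S = V(F')$, all edges of~$F'$ lie in~$E(S)$, so~$|E(F')| \leq x(S)$. The GSEC for~$S$ then gives~$|F'| = |S| - |E(F')| \geq |S| - x(S) \geq \f{S} = f(V(F'))$, which is exactly the membership condition (the case~$S = \emptyset$ holds since~$f(\emptyset) = 0$). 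This direction requires neither edge-consistency nor anything beyond unwinding the GSEC.

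The reverse inclusion is where the main difficulty lies and where edge-consistency becomes essential. Given~$F \in \Phiset{f}$, I must show that~$\mathbbm{1}_F$ satisfies every GSEC, i.e. that~$\mathbbm{1}_F(S) \leq |S| - \f{S}$ for all~$\emptyset \subsetneq S \subseteq V$. Writing~$a = \mathbbm{1}_F(S)$ for the number of edges of~$F$ inside~$S$, the naive approach of applying the defining inequality of~$\Phiset{f}$ to the subforest~$F_0 \subseteq F$ obtained by deleting from~$F$ every vertex not in~$S$ only yields~$a \leq |V(F) \cap S| - f(V(F) \cap S)$. Since~$f$ need not be monotone, this does \emph{not} directly give the bound in terms of~$S$ and~$\f{S}$; the obstacle is precisely that~$S$ may contain vertices outside~$V(F)$, so one cannot simply compare~$f(V(F) \cap S)$ with~$\f{S}$.

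To overcome this, I would introduce the forest~$F''$ with vertex set~$S$ and the same edge set as~$F_0$ (that is,~$F_0$ augmented by the isolated vertices in~$S \setminus V(F)$). Since~$F_0 \subseteq F \in \Phiset{f}$ and~$\Phiset{f}$ is downward closed, we have~$F_0 \in \Phiset{f}$; as~$E(F'') = E(F_0)$, edge-consistency gives~$F'' \in \Phiset{f}$. Applying the defining inequality of~$\Phiset{f}$ to~$F''$ itself now yields~$|F''| \geq f(V(F'')) = \f{S}$, and since~$|F''| = |S| - a$ this rearranges to~$a \leq |S| - \f{S}$, as required. Combining the two inclusions completes the proof. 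The step to get right is this last augmentation-by-isolated-vertices argument: it is the only place the edge-consistency hypothesis enters, and it is exactly what repairs the failure of monotonicity of~$f$ that blocks the naive attempt.
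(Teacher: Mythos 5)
Your proof is correct and follows essentially the same route as the paper's: the first inclusion is the contrapositive form of the paper's argument (a violated membership condition yields a violated GSEC and vice versa), and the second inclusion uses exactly the paper's key idea of padding with isolated vertices so that edge-consistency and downward closedness together give a forest on vertex set~$S$ inside~$\Phiset{f}$ (the paper augments~$F$ to cover~$S$ first and then restricts, you restrict first and then augment — an immaterial reordering). Your explicit remark that this padding step is what repairs the non-monotonicity of~$f$ matches the role edge-consistency plays in the paper's proof.
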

\begin{proof}
    By the definition of representability (Definition~\ref{def:representation}), we need to prove that~$\P{f} \cap \Z^E = \{\mathbbm{1}_F : F \in \Phiset{f}\}$. Let~$F$ be an arbitrary forest in~$G$ and suppose first that~$F \notin \Phiset{f}$, so there exists~$F' \subseteq F$ such that~$|F'| < \f{V(F')}$. Then
    $$\mathbbm{1}_F(V(F')) \geq \mathbbm{1}_{F'}(V(F')) = |V(F')| - |F'| > |V(F')| - \f{V(F')},$$
    meaning that~$\mathbbm{1}_F \notin \P{f}$. 
    
    For the converse, suppose that~$F \in \Phiset{f}$. Our goal is to show that~$\mathbbm{1}_F \in \P{f}$. Take an arbitrary set~$\emptyset \subsetneq S \subseteq V$. By edge-consistency, we can add singletons to~$F$ to obtain~$H \supseteq F$ such that~$H \in \Phiset{f}$,~$S \subseteq V(H)$ and~$\mathbbm{1}_F = \mathbbm{1}_H$. Let~$F'$ be obtained from~$H$ by deleting the vertices that are not in~$S$. Since~$\Phiset{f}$ is downward closed, we know that~$|F'| \geq \f{V(F')}$, therefore,~$$\mathbbm{1}_F(S) = \mathbbm{1}_H(S) = \mathbbm{1}_{F'}(S) = |S| - |F'| \leq |S| - \f{S},$$ as desired.
\end{proof}

Combining Lemmas~\ref{lemma:phi_lower_bound} and~\ref{lemma:phi_representable} we obtain the following characterization of an edge-consistent representable family of forests.

\begin{theorem}
    \label{thm:characterization1}
    Let~$\mc{F}$ be an edge-consistent family of forests. Then~$\mc{F}$ is representable if and only if~$\mc{F} = \Phiset{\lb{\mc{F}}}$.
\end{theorem}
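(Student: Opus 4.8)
The plan is to assemble the results already in hand, so the argument is essentially a bookkeeping combination of the preceding lemmas together with one genuinely new verification. I would prove the two implications separately, using Lemma~\ref{lemma:phi_lower_bound} as the bridge between the structural properties (nonemptiness, downward closedness, minimal infeasibility) and the equality~$\mc{F} = \Phiset{\lb{\mc{F}}}$, and Lemma~\ref{lemma:phi_representable} as the bridge from that equality back to representability.

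For the forward direction, assume~$\mc{F}$ is edge-consistent and representable. I would simply chain the earlier observations: Fact~\ref{fact:trivial_forest} shows that~$\mc{F}$ is nonempty (it contains all edgeless forests), Fact~\ref{fact:downward} shows that~$\mc{F}$ is downward closed, and Lemma~\ref{lemma:minimal_necessary} shows that~$\mc{F}$ has the minimal infeasibility property. Having all three properties, Lemma~\ref{lemma:phi_lower_bound} immediately yields~$\mc{F} = \Phiset{\lb{\mc{F}}}$, with no extra work.

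For the reverse direction, assume~$\mc{F} = \Phiset{\lb{\mc{F}}}$. The goal is to apply Lemma~\ref{lemma:phi_representable} with~$f = \lb{\mc{F}}$, which would give that~$\P{\lb{\mc{F}}}$ represents~$\Phiset{\lb{\mc{F}}} = \mc{F}$, i.e.\ exactly representability. Two hypotheses of that lemma must be checked. Edge-consistency of~$\Phiset{\lb{\mc{F}}}$ is immediate, since~$\Phiset{\lb{\mc{F}}} = \mc{F}$ and~$\mc{F}$ is edge-consistent by assumption. The remaining, and only nontrivial, requirement is that~$\lb{\mc{F}}$ is a valid RHS function in the sense of Definition~\ref{def:rhs_function}.

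The main obstacle is therefore this RHS-function check, and in particular the upper bound~$\lb{\mc{F}}{S} \leq |S|$. The conditions~$\lb{\mc{F}}{\emptyset} = 0$ and~$\lb{\mc{F}}{S} \geq 1$ for~$\emptyset \subsetneq S \subseteq V$ are read off directly from Definition~\ref{def:lower_bound}. For the upper bound I would argue that every minimal infeasible forest carries at least one edge. Indeed, by Lemma~\ref{lemma:phi_lower_bound} the family~$\mc{F} = \Phiset{\lb{\mc{F}}}$ is nonempty and downward closed, hence contains the empty graph; edge-consistency then forces every edgeless forest (each having empty edge set) into~$\mc{F}$, so no edgeless forest can be minimal infeasible. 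Consequently any~$F \in \M{\mc{F}}$ with~$V(F) = S$ satisfies~$|E(F)| \geq 1$, whence~$|F| = |S| - |E(F)| \leq |S| - 1$ and~$\lb{\mc{F}}{S} = 1 + \max\{|F| : F \in \M{\mc{F}},\, V(F) = S\} \leq |S|$; when~$S \notin \B{\mc{F}}$ the value is~$1 \leq |S|$ trivially. This confirms that~$\lb{\mc{F}}$ is a RHS function, and Lemma~\ref{lemma:phi_representable} then closes the reverse direction.
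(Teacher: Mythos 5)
Your proof is correct and takes essentially the same route as the paper: the forward direction chains Fact~\ref{fact:trivial_forest}, Fact~\ref{fact:downward} and Lemma~\ref{lemma:minimal_necessary} into Lemma~\ref{lemma:phi_lower_bound}, and the reverse direction applies Lemma~\ref{lemma:phi_representable} with~$f = \ell_{\mc{F}}$. Your extra verification that~$\ell_{\mc{F}}$ is a genuine RHS function (edge-consistency places every edgeless forest in~$\mc{F}$, so every minimal infeasible forest has at least one edge, giving~$\ell_{\mc{F}}(S) \leq |S|$) is correct and fills in a hypothesis the paper leaves implicit under its standing convention that any~$f$ appearing in~$\mc{P}(f)$ is assumed to be a RHS function.
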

\begin{proof}
    Since~$\mc{F}$ is representable, it follows from Facts~\ref{fact:trivial_forest} and~\ref{fact:downward} and Lemma~\ref{lemma:minimal_necessary}, that~$\mc{F}$ is nonempty, downward closed and has the minimal infeasibility property. Lemma~\ref{lemma:phi_lower_bound} then yields~$\mc{F} = \Phiset{\lb{\mc{F}}}$. Conversely, Lemma~\ref{lemma:phi_representable} implies that~$\P{\lb{\mc{F}}}$ represents~$\mc{F}$.
\end{proof}

Lastly, before continuing our discussion, we offer two simple examples illustrating how Theorem~\ref{thm:characterization1} applies to families of forests that can and cannot be represented with GSECs.

\begin{example}
\label{example:cmst}
Let~$Q \in \Q_+$ and $d \in \Q^V_{+}$ be such that~$d_v \leq Q$ for all~$v \in V$. Consider the family of forests~$\mc{F}_{\tsc{cmst}} = \{ F \in \allforests : d(V(T)) \leq Q,~\forall \, \text{tree } T \in F \}$.
Clearly,~$\mc{F}_{\tsc{cmst}}$ is downward closed and contains all the forests with no edges. Moreover, any minimal infeasible forest with respect to~$\mc{F}_{\tsc{cmst}}$ is a tree~$T \subseteq G$ such that~$d(V(T)) > Q$. Therefore, for every~$\emptyset \subsetneq S \subseteq V$,~$\lb{\mc{F}_{\tsc{cmst}}}{S} \leq 2$, and equality implies that~$d(S) > Q$, so the vertices in~$S$ cannot be covered with a single tree. We thus conclude that~$\mc{F}_{\tsc{cmst}}$ satisfies the minimal infeasibility property, and by Theorem~\ref{thm:characterization1},~$\mc{F}_{\tsc{cmst}}$ is representable. This example is consistent with previous work showing that GSECs can be used to formulate the CMST~\cite{hall1996}.~\qed
\end{example}

\begin{example}
\label{example:degree}
Let~$b \in \Z^V_+$ be a vector of upper bounds on the degree of each vertex, and consider the family of forests~$\mc{F}_{\tsc{deg}} = \{ F \in \allforests : |\delta_F(v)| \leq b_v,~\forall v \in V(F) \}$. The family~$\mc{F}_{\tsc{deg}}$ is downward closed and contains all the forests with no edges. However, perhaps not surprisingly,~$\mc{F}_{\tsc{deg}}$ cannot be represented with GSECs. To see this, suppose that~$G$ is the complete graph,~$V = \{v_1, v_2, v_3, v_4\}$, and~$b_v = 2$, for all~$v \in V$. Consider the spanning trees~$T_1$ and~$T_2$ with~$E(T_1) = \{v_1v_2, v_1v_3, v_1v_4\}$ and~$E(T_2) = \{v_1v_2, v_2v_3, v_3v_4\}$. Since~$T_1$ is minimally infeasible, we have~$\lb{\mc{F}_{\tsc{deg}}}{V} \ge 2$. However,~$\mathbbm{1}_{T_2}(V) = 3 > |V| - 2 = 2$, which shows that the minimal infeasibility property fails.~\qed
\end{example}

\subsection{Different RHS functions}
\label{subsection:rhs}

Although Theorem~\ref{thm:characterization1} precisely identifies the conditions on a family of forests that guarantee its representability via GSECs, the set~$\P{\lb{\mc{F}}}$ may provide a weak polyhedral relaxation of the convex hull of~$\{\mathbbm{1}_F : F \in \mc{F}\}$. This weakness can be particularly undesirable when using the relaxation~$\P{\lb{\mc{F}}}$ in a branch-and-cut algorithm. In this sense, we now assume that~$\mc{F}$ is representable, and we ask which choices of RHS functions~$\f$ ensure that~$\P{f}$ represents~$\mc{F}$. 

Our first result shows that~$\P{\lb{\mc{F}}}$ is the weakest relaxation of this type. We remark that the following statements are presented in a somewhat general form, as this will be useful to prove the results in Section~\ref{section:extension}.
\begin{lemma}
    \label{lemma:weakest_relaxation}
    Let~$\mc{F}$ be a family of forests and let~$f$ be a RHS function such that~$\{\mathbbm{1}_F : F \in \mc{F}\} \subseteq \P{f}$. Then
    \begin{enumerate}[(a), leftmargin=*, align=left]
        \item for any forest~$F \in \M{\mc{F}}$,~$\mathbbm{1}_F \notin \P{f}$ implies that~$x(V(F)) \leq |V(F)| - |F| - 1$ is valid for~$\P{f}$; and \label{item:weakest_relaxation1}
        \item if~$\P{f}$ represents~$\mc{F}$, then~$\P{f} \subseteq \P{\lb{\mc{F}}}$. \label{item:weakest_relaxation2}
    \end{enumerate}
\end{lemma}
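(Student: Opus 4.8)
The plan is to treat part~(a) as the engine and deduce part~(b) from it. For part~(a), the key structural observation is that minimality of $F$ forces every GSEC that $\mathbbm{1}_F$ violates to be indexed by a set containing all of $V(F)$; once this is pinned down, a single nested GSEC together with nonnegativity dominates the desired strengthened cut. For part~(b) I would verify the GSEC of~$\P{\lb{\mc{F}}}$ at each~$\emptyset \subsetneq S \subseteq V$ separately: the case~$S \notin \B{\mc{F}}$ is immediate from~$\f{}$ being a RHS function, while~$S \in \B{\mc{F}}$ is handled by applying part~(a) to a maximizing minimal infeasible forest.

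For part~(a), set~$S \coloneqq V(F)$ and record that, since~$F$ is a forest with~$|F|$ components, $\mathbbm{1}_F(S) = |E(F)| = |S| - |F|$. Because~$\mathbbm{1}_F \in [0,1]^E$, the only way~$\mathbbm{1}_F \notin \P{f}$ is that some GSEC fails, say~$\mathbbm{1}_F(S') > |S'| - \f{S'}$ for a set~$\emptyset \subsetneq S' \subseteq V$. I would first show~$V(F) \subseteq S'$: letting~$F'$ be~$F$ with the vertices outside~$S'$ deleted, one has~$\mathbbm{1}_{F'}(S') = \mathbbm{1}_F(S')$; if~$V(F) \not\subseteq S'$ then~$F' \subsetneq F$, so~$F' \in \mc{F}$ by minimality and~$\mathbbm{1}_{F'} \in \P{f}$ by hypothesis, contradicting the violation. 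With~$V(F) \subseteq S'$ all edges of~$F$ lie in~$E(S')$, so~$\mathbbm{1}_F(S') = |S| - |F|$ and the violation rearranges to~$\f{S'} \geq |S'| - |S| + |F| + 1$. Finally, for any~$x \in \P{f}$ the inclusion~$E(S) \subseteq E(S')$ and~$x \geq 0$ give~$x(S) \leq x(S') \leq |S'| - \f{S'} \leq |S| - |F| - 1$, which is exactly the claimed valid inequality.

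For part~(b), fix~$\emptyset \subsetneq S \subseteq V$. If~$S \notin \B{\mc{F}}$ then~$\lb{\mc{F}}{S} = 1$, and the GSEC of~$\P{f}$ already yields~$x(S) \leq |S| - \f{S} \leq |S| - 1 = |S| - \lb{\mc{F}}{S}$. If~$S \in \B{\mc{F}}$, pick~$F \in \M{\mc{F}}$ with~$V(F) = S$ attaining~$|F| = \lb{\mc{F}}{S} - 1$; the target inequality~$x(S) \leq |S| - |F| - 1$ is precisely the conclusion of part~(a), provided~$\mathbbm{1}_F \notin \P{f}$. This last fact is where representability enters: if~$\mathbbm{1}_F \in \P{f}$, then since~$\P{f}$ represents~$\mc{F}$ there is~$F'' \in \mc{F}$ with~$E(F'') = E(F)$, whence edge-consistency forces~$F \in \mc{F}$, contradicting~$F \in \M{\mc{F}}$. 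As both polytopes carry the same box constraints~$[0,1]^E$, validity of every GSEC of~$\P{\lb{\mc{F}}}$ for~$\P{f}$ gives~$\P{f} \subseteq \P{\lb{\mc{F}}}$.

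The main obstacle is the superset argument in part~(a): seeing that minimality confines any violated GSEC to sets containing~$V(F)$ is what allows one nested GSEC to dominate the strengthened cut, and it is the only place where the minimality hypothesis on~$F$ is genuinely used. The step~$\mathbbm{1}_F \notin \P{f}$ in part~(b) is comparatively routine but leans on edge-consistency; I would keep that assumption explicit, since it is exactly what guarantees that a minimal infeasible forest is not identified (through its incidence vector) with a feasible one.
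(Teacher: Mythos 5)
Your proof of part~(a) is correct and is essentially the paper's argument with the logic run in the opposite direction: the paper supposes the strengthened cut $x(V(F)) \leq |V(F)| - |F| - 1$ is \emph{not} valid and then checks, through the same two cases you use (GSEC sets containing all of $V(F)$, and sets meeting $V(F)$ properly, where minimality plus $\{\mathbbm{1}_{F'} : F' \in \mc{F}\} \subseteq \P{f}$ enters), that $\mathbbm{1}_F$ would satisfy every GSEC, contradicting $\mathbbm{1}_F \notin \P{f}$; you instead pick a violated GSEC, show its index set must contain $V(F)$, and let that single inequality together with nonnegativity dominate the cut. Same ingredients, slightly more direct packaging.

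Your handling of part~(b) flags a genuine issue, and you are right to insist on edge-consistency. The lemma as stated does not assume $\mc{F}$ is edge-consistent, and representability alone does not yield $\mathbbm{1}_F \notin \P{f}$ for $F \in \M{\mc{F}}$, since a forest outside $\mc{F}$ may share its edge set with a member of $\mc{F}$ (differing only in isolated vertices). The paper's own proof of~(b) skips exactly this step: it applies item~(a) to the maximizing $F \in \M{\mc{F}}$ without verifying $\mathbbm{1}_F \notin \P{f}$. Without edge-consistency the claim is in fact false: take $V = \{a, b, c\}$, $E = \{ab\}$, and let $\mc{F}$ consist of all proper subgraphs of $G$. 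Then $\M{\mc{F}} = \{G\}$ and $|G| = 2$, so $\lb{\mc{F}}{V} = 3$ and $\P{\lb{\mc{F}}}$ is the single point $x_{ab} = 0$; but the RHS function $f \equiv 1$ gives $\P{f} = [0,1]^E$, which represents $\mc{F}$ (its integer points are $x_{ab} = 0$ and $x_{ab} = 1$, the latter being the incidence vector of the forest $(\{a,b\}, \{ab\}) \in \mc{F}$), and clearly $\P{f} \not\subseteq \P{\lb{\mc{F}}}$. So the hypothesis you add is exactly what is needed; the paper's downstream uses of item~(b) are unaffected, since Theorem~\ref{thm:characterization2} and Proposition~\ref{prop:characterization3} only invoke it for edge-consistent families.
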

\begin{proof}
    To show item~\ref{item:weakest_relaxation1}, let~$F \in \M{\mc{F}}$ be such that~$\mathbbm{1}_F \notin \P{f}$. Let~$U = V(F)$ and suppose by contradiction that~$x(U) \leq |U| - |F| - 1$ is not valid for~$\P{f}$. We show that this implies that~$\mathbbm{1}_F$ satisfy all the GSECs~$x(S) \leq |S| - \f{S}$ defining~$\P{f}$, contradicting the choice of~$F$. By the definition of RHS functions, we assume without loss of generality that~$U \cap S \neq \emptyset$.

    \noindent
    \textbf{Case~$|U \cap S| = |U|$:} We first claim that~$\f{S} \leq |S \setminus U| + |F|$. To see this, suppose by contradiction that~$\f{S} \geq |S \setminus U| + |F| + 1$. For any~$\bar{x} \in \P{f}$,~$$\bar{x}(U) \leq \bar{x}(S) \leq |S| - \f{S} \leq |U| - |F| - 1,$$ contradicting the assumption that~$x(U) \leq |U| - |F| - 1$ is not valid for~$\P{f}$. Hence,~$|F| \geq \f{S} - |S \setminus U|$, which yields~$$\mathbbm{1}_F(S) = \mathbbm{1}_F(U) = |U| - |F| \leq |U| - (\f{S} -|S \setminus U|) = |S| - \f{S}.$$
    
    \noindent
    \textbf{Case~$0 < |V(F) \cap S| < |V(F)|$:} Let~$H$ be the forest obtained by deleting from~$F$ all the vertices that are not in~$S$. By minimality of~$F$, we know that~$H \in \mc{F}$. Since~$\mathbbm{1}_H \in \{\mathbbm{1}_{F'} : F' \in \mc{F}\} \subseteq \P{f}$, it follows that~$\mathbbm{1}_F(S) = \mathbbm{1}_H(S) \leq |S| - \f{S}$.\\

    To prove item~\ref{item:weakest_relaxation2}, we can just apply item~\ref{item:weakest_relaxation1} for each minimal infeasible forest defining~$\lb{\mc{F}}$. Specifically, let~$\emptyset \subsetneq S \subseteq V$ be such that~$\lb{\mc{F}}{S} \geq 2$, and let~$F \in \M{\mc{F}}$ be such that~$V(F) = S$ and~$|F| = \lb{\mc{F}}{V(F)} - 1$. Then, item~\ref{item:weakest_relaxation1} implies that~$x(S) \leq |S| - \lb{\mc{F}}{S}$ is valid for~$\P{f}$.
\end{proof}

On the other hand, perhaps not surprisingly, the strongest possible set~$\P{f}$ that represents~$\mc{F}$ is given by the following RHS function.
\begin{definition}
    \label{def:upper_bound}
    Let~$\mc{F}$ be a nonempty family of forests. Define the RHS function
    \begin{equation*}
        u_{\mc{F}}(S) \coloneqq \min \{|S| - |E(F) \cap E(S)| : F \in \mc{F}\},
    \end{equation*}
    for each~$\emptyset \subsetneq S \subseteq V$.
\end{definition}

\begin{lemma}
    \label{lemma:strongest_relaxation}
    Let~$\mc{F}$ be a family of forests. Then:
    \begin{enumerate}[(A), leftmargin=*, align=left]
        \item $\{\mathbbm{1}_F : F \in \mc{F}\} \subseteq \P{\ub{\mc{F}}}$; \label{item:strongest_relaxation1}
        \item if~$\P{f}$ contains~$\{\mathbbm{1}_F : F \in \mc{F}\}$, then~$\P{f}$ also contains~$\P{\ub{\mc{F}}}$; and \label{item:strongest_relaxation2}
        \item if~$\mc{F}$ is edge-consistent and representable, then~$\P{\ub{\mc{F}}}$ represents~$\mc{F}$.\label{item:strongest_relaxation3}
    \end{enumerate}
\end{lemma}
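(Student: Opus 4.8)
The plan is to reduce everything to a single reformulation of $\ub{\mc{F}}$ together with the monotonicity of $\P{\cdot}$ in its RHS function. First I would record that, for any forest $F$ and any $\emptyset \subsetneq S \subseteq V$, the identity $\mathbbm{1}_F(S) = |E(F) \cap E(S)|$ holds by the definition of $\mathbbm{1}_F(S)$, so Definition~\ref{def:upper_bound} can be rewritten as
$$\ub{\mc{F}}{S} = \min\{|S| - \mathbbm{1}_F(S) : F \in \mc{F}\}.$$
Before using this I would check that $\ub{\mc{F}}$ is a genuine RHS function, which is what makes $\P{\ub{\mc{F}}}$ meaningful: since $\mc{F}$ is (implicitly) nonempty the minimum ranges over a nonempty set, and since the edges of $F$ with both endpoints in $S$ form a forest on at most $|S|$ vertices there are at most $|S| - 1$ of them, so $1 \le \ub{\mc{F}}{S} \le |S|$.

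For item~\ref{item:strongest_relaxation1}, I would fix $F \in \mc{F}$ and simply plug the index $F$ into the minimum to get $\ub{\mc{F}}{S} \le |S| - \mathbbm{1}_F(S)$, i.e.\ $\mathbbm{1}_F(S) \le |S| - \ub{\mc{F}}{S}$, for every $\emptyset \subsetneq S \subseteq V$; as $\mathbbm{1}_F \in [0,1]^E$ automatically, this is exactly $\mathbbm{1}_F \in \P{\ub{\mc{F}}}$. For item~\ref{item:strongest_relaxation2}, the hypothesis $\{\mathbbm{1}_F : F \in \mc{F}\} \subseteq \P{f}$ yields $\mathbbm{1}_F(S) \le |S| - f(S)$, i.e.\ $f(S) \le |S| - \mathbbm{1}_F(S)$, for every $F \in \mc{F}$; minimizing over $F$ gives the pointwise bound $f(S) \le \ub{\mc{F}}{S}$. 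This inequality is the crux of the lemma: it makes each GSEC $x(S) \le |S| - \ub{\mc{F}}{S}$ of $\P{\ub{\mc{F}}}$ at least as tight as the corresponding GSEC $x(S) \le |S| - f(S)$ of $\P{f}$ (the box constraints being identical), so every point of $\P{\ub{\mc{F}}}$ lies in $\P{f}$, i.e.\ $\P{\ub{\mc{F}}} \subseteq \P{f}$.

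For item~\ref{item:strongest_relaxation3} I would combine the two previous parts with representability. The inclusion $\{\mathbbm{1}_F : F \in \mc{F}\} \subseteq \P{\ub{\mc{F}}} \cap \Z^E$ is immediate from item~\ref{item:strongest_relaxation1}. For the reverse inclusion, representability supplies a RHS function $f$ with $\P{f} \cap \Z^E = \{\mathbbm{1}_F : F \in \mc{F}\}$; in particular $\{\mathbbm{1}_F : F \in \mc{F}\} \subseteq \P{f}$, so item~\ref{item:strongest_relaxation2} gives $\P{\ub{\mc{F}}} \subseteq \P{f}$ and hence $\P{\ub{\mc{F}}} \cap \Z^E \subseteq \P{f} \cap \Z^E = \{\mathbbm{1}_F : F \in \mc{F}\}$. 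The two inclusions together say precisely that $\P{\ub{\mc{F}}}$ represents $\mc{F}$.

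I do not expect a real obstacle: once the min-reformulation is in place, the argument is pure bookkeeping about comparing RHS functions and the fact that a larger RHS function defines a smaller polytope. The only step needing a moment of care is confirming that $\ub{\mc{F}}$ is a valid RHS function, since that both legitimizes the notation $\P{\ub{\mc{F}}}$ and is the one place where the forest structure of the members of $\mc{F}$ (bounding the number of edges inside $S$) is actually used.
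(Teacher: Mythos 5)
Your proposal is correct, and for items~\ref{item:strongest_relaxation1} and~\ref{item:strongest_relaxation2} it is essentially the paper's own argument: both rest on the reformulation $\ub{\mc{F}}{S} = \min\{|S| - \mathbbm{1}_F(S) : F \in \mc{F}\}$, from which \ref{item:strongest_relaxation1} follows by instantiating the minimum at a given $F \in \mc{F}$, and \ref{item:strongest_relaxation2} follows from the pointwise bound $\f{S} \leq \ub{\mc{F}}{S}$ together with the monotonicity you describe (a larger RHS function defines a smaller polytope); your explicit check that $\ub{\mc{F}}$ is a genuine RHS function --- at most $|S|-1$ edges of a forest lie inside $S$ --- is a detail the paper leaves implicit in Definition~\ref{def:upper_bound}. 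Where you genuinely diverge is item~\ref{item:strongest_relaxation3}. The paper routes through its characterization machinery: it invokes Theorem~\ref{thm:characterization1} (and hence Lemma~\ref{lemma:phi_representable}, which is where edge-consistency is used) to exhibit the concrete representing polytope $\P{\lb{\mc{F}}}$, and then applies item~\ref{item:strongest_relaxation2} with $\f = \lb{\mc{F}}$ to sandwich $\P{\ub{\mc{F}}} \cap \Z^E$ between $\{\mathbbm{1}_F : F \in \mc{F}\}$ and $\P{\lb{\mc{F}}} \cap \Z^E$. You instead take an \emph{arbitrary} representing polytope $\P{f}$, whose existence is exactly what Definition~\ref{def:representation} supplies, and run the same sandwich with it. Your route is more elementary and slightly more general: it never uses edge-consistency, so it in fact shows that $\P{\ub{\mc{F}}}$ represents $\mc{F}$ whenever $\mc{F}$ is representable at all, whereas the paper's detour through $\lb{\mc{F}}$ mainly serves to line up the two bounding functions that are reassembled later in Theorem~\ref{thm:characterization2}.
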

\begin{proof}
    We prove items~\ref{item:strongest_relaxation1} and~\ref{item:strongest_relaxation2} jointly. Suppose that~$\{\mathbbm{1}_F : F \in \mc{F}\} \subseteq \P{f}$ and take an arbitrary set~$\emptyset \subsetneq S \subseteq V$. Since~$x(S) \leq |S| - \f{S}$ is valid for~$\{\mathbbm{1}_F : F \in \mc{F}\}$, we have that
    \begin{align}
        \f{S} & \leq \min \{|S| - \mathbbm{1}_F(S) : F \in \mc{F}\} \nonumber \\
        & = \min\{|S| - |E(F) \cap E(S)| : F \in \mc{F} \} \nonumber \\
        & = \ub{\mc{F}}{S}. \nonumber
    \end{align}
    The inequality above shows that any point~$\bar{x}$ in~$\{\mathbbm{1}_F : F \in \mc{F}\}$ satisfies~$\bar{x}(S) \leq |S| - \ub{\mc{F}}{S}$, proving~\ref{item:strongest_relaxation1}. We have also shown that, for any~$\bar{x} \in \P{\ub{\mc{F}}}$, we have~$\bar{x}(S) \leq |S| - \ub{\mc{F}}{S} \leq |S| - \f{S}$, meaning that~\ref{item:strongest_relaxation2} also holds.

    Using Theorem~\ref{thm:characterization1}, we prove~\ref{item:strongest_relaxation3} by showing that~$\P{\ub{\mc{F}}} \cap \Z^E = \P{\lb{\mc{F}}} \cap \Z^E$. Since~$\P{\lb{\mc{F}}} \cap \Z^E = \{\mathbbm{1}_F : F \in \mc{F}\}$, item~\ref{item:strongest_relaxation1} gives~$\P{\ub{\mc{F}}} \cap \Z^E \supseteq \P{\lb{\mc{F}}} \cap \Z^E$. For the other side of the inclusion, apply item~\ref{item:strongest_relaxation2} with~$\f = \lb{\mc{F}}$ to obtain that~$\P{\ub{\mc{F}}} \subseteq \P{\lb{\mc{F}}}$.
\end{proof}

Applying Lemmas~\ref{lemma:weakest_relaxation} and~\ref{lemma:strongest_relaxation} with Theorem~\ref{thm:characterization1}, we close the section with the next characterization.

\begin{theorem}
    \label{thm:characterization2}
    Let~$\mc{F}$ be an edge-consistent family of forests. Then~$\P{f}$ represents~$\mc{F}$ if and only if~$\mc{F} = \Phiset{\lb{\mc{F}}}$ and~$\P{\ub{\mc{F}}} \subseteq \P{f} \subseteq \P{\lb{\mc{F}}}$.
\end{theorem}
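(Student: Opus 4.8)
The plan is to prove Theorem~\ref{thm:characterization2} by assembling the three preceding results---Theorem~\ref{thm:characterization1}, Lemma~\ref{lemma:weakest_relaxation}, and Lemma~\ref{lemma:strongest_relaxation}---into the two directions of the biconditional. Throughout, $\mc{F}$ is edge-consistent, so representability is governed by the condition $\mc{F} = \Phiset{\lb{\mc{F}}}$ via Theorem~\ref{thm:characterization1}. The key observation tying everything together is that the condition ``$\P{f}$ represents $\mc{F}$'' should decompose into a \emph{combinatorial} condition on $\mc{F}$ itself ($\mc{F} = \Phiset{\lb{\mc{F}}}$, i.e.\ $\mc{F}$ is representable \emph{at all}) together with a purely \emph{polyhedral} sandwiching condition $\P{\ub{\mc{F}}} \subseteq \P{f} \subseteq \P{\lb{\mc{F}}}$ that pins down exactly which RHS functions $f$ achieve the representation.

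For the forward direction, assume $\P{f}$ represents $\mc{F}$. Then in particular $\mc{F}$ is representable, so Theorem~\ref{thm:characterization1} immediately gives $\mc{F} = \Phiset{\lb{\mc{F}}}$. Next, since $\P{f}$ represents $\mc{F}$ we have $\{\mathbbm{1}_F : F \in \mc{F}\} = \P{f} \cap \Z^E \subseteq \P{f}$, so the hypothesis of Lemma~\ref{lemma:weakest_relaxation}\ref{item:weakest_relaxation2} is met, yielding $\P{f} \subseteq \P{\lb{\mc{F}}}$. For the lower inclusion, I would apply Lemma~\ref{lemma:strongest_relaxation}\ref{item:strongest_relaxation2}: the same containment $\{\mathbbm{1}_F : F \in \mc{F}\} \subseteq \P{f}$ is exactly its hypothesis, and the conclusion is $\P{\ub{\mc{F}}} \subseteq \P{f}$. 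Combining the two gives the sandwich $\P{\ub{\mc{F}}} \subseteq \P{f} \subseteq \P{\lb{\mc{F}}}$.

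For the converse, assume $\mc{F} = \Phiset{\lb{\mc{F}}}$ and $\P{\ub{\mc{F}}} \subseteq \P{f} \subseteq \P{\lb{\mc{F}}}$. Since $\mc{F} = \Phiset{\lb{\mc{F}}}$, Theorem~\ref{thm:characterization1} tells us $\mc{F}$ is representable; in fact its proof (via Lemma~\ref{lemma:phi_representable}) gives that $\P{\lb{\mc{F}}}$ represents $\mc{F}$, i.e.\ $\P{\lb{\mc{F}}} \cap \Z^E = \{\mathbbm{1}_F : F \in \mc{F}\}$. Moreover, by Lemma~\ref{lemma:strongest_relaxation}\ref{item:strongest_relaxation3} (using edge-consistency and representability), $\P{\ub{\mc{F}}}$ also represents $\mc{F}$, so $\P{\ub{\mc{F}}} \cap \Z^E = \{\mathbbm{1}_F : F \in \mc{F}\}$ as well. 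The two polytopes bounding $\P{f}$ therefore contain \emph{the same} integer points. Intersecting the sandwich with $\Z^E$ then squeezes $\P{f} \cap \Z^E$ between two equal sets, forcing
\begin{equation*}
    \{\mathbbm{1}_F : F \in \mc{F}\} = \P{\ub{\mc{F}}} \cap \Z^E \subseteq \P{f} \cap \Z^E \subseteq \P{\lb{\mc{F}}} \cap \Z^E = \{\mathbbm{1}_F : F \in \mc{F}\},
\end{equation*}
so $\P{f} \cap \Z^E = \{\mathbbm{1}_F : F \in \mc{F}\}$, which is precisely the statement that $\P{f}$ represents $\mc{F}$.

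I do not expect a genuine obstacle here, since the heavy lifting is already done in the cited lemmas; the only point demanding care is making sure the hypotheses line up. In particular, Lemma~\ref{lemma:strongest_relaxation}\ref{item:strongest_relaxation3} requires $\mc{F}$ to be both edge-consistent \emph{and} representable, so I must first establish representability (from $\mc{F} = \Phiset{\lb{\mc{F}}}$ via Theorem~\ref{thm:characterization1}) before invoking it---a minor ordering issue rather than a real difficulty. The essential idea is simply that the integer hulls of $\P{\ub{\mc{F}}}$ and $\P{\lb{\mc{F}}}$ coincide with $\{\mathbbm{1}_F : F \in \mc{F}\}$, so any $\P{f}$ trapped between them inherits the same integer points.
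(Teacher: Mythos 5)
Your proof is correct and follows essentially the same route as the paper: the forward direction via Theorem~\ref{thm:characterization1}, Lemma~\ref{lemma:weakest_relaxation}\ref{item:weakest_relaxation2}, and Lemma~\ref{lemma:strongest_relaxation}\ref{item:strongest_relaxation2}, and the converse by noting that both $\P{\ub{\mc{F}}}$ and $\P{\lb{\mc{F}}}$ represent $\mc{F}$ (via Lemmas~\ref{lemma:phi_representable} and~\ref{lemma:strongest_relaxation}\ref{item:strongest_relaxation3}) and squeezing the integer points of $\P{f}$ between them. Your explicit attention to the ordering issue---establishing representability before invoking Lemma~\ref{lemma:strongest_relaxation}\ref{item:strongest_relaxation3}---is a point the paper leaves implicit, but the argument is the same.
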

\begin{proof}
    By Theorem~\ref{thm:characterization1} and Lemmas~\ref{lemma:weakest_relaxation} and~\ref{lemma:strongest_relaxation}, it suffices to show that, under the assumption that~$\mc{F} = \Phiset{\lb{\mc{F}}}$, we have that~$\P{\ub{\mc{F}}} \subseteq \P{f} \subseteq \P{\lb{\mc{F}}}$ implies that~$\P{f}$ represents~$\mc{F}$. Indeed,~$\P{\ub{\mc{F}}} \cap \Z^E \subseteq \P{f} \cap \Z^E \subseteq \P{\lb{\mc{F}}} \cap \Z^E$, and from Lemmas~\ref{lemma:phi_representable} and~\ref{lemma:strongest_relaxation} we know that both~$\P{\lb{\mc{F}}}$ and~$\P{\ub{\mc{F}}}$ represents~$\mc{F}$. Hence,~$\{\mathbbm{1}_F : F \in \mc{F}\} = \P{\ub{\mc{F}}} \cap \Z^E = \P{\lb{\mc{F}}} \cap \Z^E = \P{f} \cap \Z^E$.
\end{proof}

\section{Extension and the case of vehicle routing problems}
\label{section:extension}

Building on Theorem~\ref{thm:characterization2}, we now extend our GSEC-based characterization to more general MIP formulations. Specifically, we consider formulations whose feasible regions can be represented as~$\P{f} \cap \mc{Q} \cap \Z^E$, where~$\mc{Q} \subseteq \R^E$ is associated with additional constraints that are not necessarily GSECs (note that such constraints may arise from the projection of a higher-dimensional polyhedron). 

Since the subtour elimination constraints are always valid for polytope~$\P{f}$, we assume without loss of generality that these inequalities are all valid for~$\mc{Q}$, so~$\mc{Q}$ represents a family of forests~$\mc{C}$. We further assume that the family~$\mc{C}$ is known, and our goal is to study which choices of RHS functions (if any) allow us to represent a target family of forests~$\mc{H} \subseteq \mc{C}$ as the integer vectors inside~$\P{f} \cap \mc{Q}$. The following example illustrates how this abstraction may apply in the context of Section~\ref{section:setup}.

\begin{example}
\label{example:vrp}
Consider the setup in Section~\ref{section:setup}, where~$G_0 = (V_0, E_0)$ is a complete undirected graph with~$V_0 = \{0\} \dot\cup V$ and~$E_0 = \{0v : v \in V\} \dot\cup E$. Recall that, in this case, we set~$G = (V, E) = G_0 - \{0\}$. Let~$\mc{C}_{\tsc{path}}$ be the family of all subgraphs in~$G$ whose components are paths, and observe that~$\mc{C}_{\tsc{path}}$ is represented by the polytope
\begin{equation*}
\mc{Q}_{\tsc{path}} =
\left\{x|_{E} \in \R^{E}:~
\begin{aligned}
& x(\delta_{G_0}(v)) = 2, && \forall v \in V \\
& x(S) \leq |S| - 1, && \forall \emptyset \subsetneq S \subseteq V \\
& 0 \leq x_e \leq 1 + \mb{I}(0 \in e), && \forall e \in E_0
\end{aligned}
\right\}.
\end{equation*}

\noindent
Hence, by characterizing the family of forests~$\mc{H} \subseteq \mc{C}_{\tsc{path}}$ that can be represented by~$\P{f} \cap \mc{Q}_{\tsc{path}}$, we consequently determine the types of VRPs that can be modeled as in formulation~$\vrpform{f}$ without constraint~\eqref{eq:cvrp_depot} (i.e.,~$x(\delta_{G_0}(0)) = 2k$).~\qed
\end{example}

The reason that we exclude the depot constraint~\eqref{eq:cvrp_depot} in Example~\ref{example:vrp} is that its inclusion could cause the set of forests represented by~$\Qpath$ to lose its downward-closedness property, which is essential for the following result.

\begin{proposition}
    \label{prop:characterization3}
    Let~$\mc{C}$ be a nonempty, edge-consistent and downward closed family of forests in~$G$, and let~$\mc{Q} \subseteq \R^E_+$ represent~$\mc{C}$. Let~$\mc{H} \subseteq \mc{C}$ be an edge-consistent family of forests. Then~$\P{f} \cap \mc{Q}$ represents~$\mc{H}$ if and only if
    \begin{enumerate}[(i), leftmargin=*, align=left]
        \item $\mc{H} = \Phiset{\ell_{\mc{H}, \mc{C}}} \cap \mc{C}$; and \label{item:downward_representation1}
        \item $\P{u_{\mc{H}}} \subseteq \P{f} \subseteq \P{\ell_{\mc{H}, \mc{C}}}$. \label{item:downward_representation2}
    \end{enumerate}
\end{proposition}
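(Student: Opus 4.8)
The plan is to first recast the claim in terms of integer points and then prove each direction by sandwiching~$\P{f}$ between~$\P{\ub{\mc H}}$ and~$\P{\ell_{\mc H,\mc C}}$. Because~$\mc Q$ represents~$\mc C$, intersecting with~$\Z^E$ gives~$(\P{f}\cap\mc Q)\cap\Z^E=\{\mathbbm{1}_F:F\in\mc C\}\cap\P{f}$; hence, setting~$\mc H_f\coloneqq\{F\in\mc C:\mathbbm{1}_F\in\P{f}\}$ --- an edge-consistent family, since membership depends only on~$E(F)$ and~$\mc C$ is edge-consistent --- the claim ``$\P{f}\cap\mc Q$ represents~$\mc H$'' is, by edge-consistency of both~$\mc H$ and~$\mc H_f$, equivalent to the set identity~$\mc H_f=\mc H$. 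Two facts will be used repeatedly: the unconditional implication~$\mathbbm{1}_F\in\P{g}\Rightarrow F\in\Phiset{g}$ extracted from the first half of the proof of Lemma~\ref{lemma:phi_representable}, and that~$\P{g}$ is closed downward under~$\le$.

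For the backward direction I assume conditions~\ref{item:downward_representation1} and~\ref{item:downward_representation2} and prove~$\mc H_f=\mc H$. For~$\mc H\subseteq\mc H_f$: any~$F\in\mc H$ satisfies~$\mathbbm{1}_F\in\P{\ub{\mc H}}$ by item~\ref{item:strongest_relaxation1} of Lemma~\ref{lemma:strongest_relaxation}, so~$\mathbbm{1}_F\in\P{f}$ by~$\P{\ub{\mc H}}\subseteq\P{f}$, and~$F\in\mc C$; thus~$F\in\mc H_f$. For~$\mc H_f\subseteq\mc H$: any~$F\in\mc H_f$ has~$\mathbbm{1}_F\in\P{f}\subseteq\P{\ell_{\mc H,\mc C}}$, so~$F\in\Phiset{\ell_{\mc H,\mc C}}$, and with~$F\in\mc C$ and condition~\ref{item:downward_representation1} we get~$F\in\Phiset{\ell_{\mc H,\mc C}}\cap\mc C=\mc H$. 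The key design choice here is to route the membership claims through~$\ub{\mc H}$ rather than attempting the converse implication~$F\in\Phiset{\ell_{\mc H,\mc C}}\Rightarrow\mathbbm{1}_F\in\P{\ell_{\mc H,\mc C}}$, which would require edge-consistency of~$\Phiset{\ell_{\mc H,\mc C}}$ and is not available.

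For the forward direction I assume~$\mc H_f=\mc H$. First,~$\mc H$ is downward closed, since~$\mc C$ is and~$\P{f}$ is closed under~$\le$; also~$\emptyset\in\mc H$. To obtain condition~\ref{item:downward_representation2}, the inclusion~$\P{\ub{\mc H}}\subseteq\P{f}$ is immediate from item~\ref{item:strongest_relaxation2} of Lemma~\ref{lemma:strongest_relaxation} together with~$\{\mathbbm{1}_F:F\in\mc H\}\subseteq\P{f}$ (which is just~$\mc H=\mc H_f$). For~$\P{f}\subseteq\P{\ell_{\mc H,\mc C}}$ I verify each constraint~$x(S)\le|S|-\ell_{\mc H,\mc C}(S)$: when~$\ell_{\mc H,\mc C}(S)=1$ it is a subtour elimination constraint, valid on~$\P{f}$; when~$\ell_{\mc H,\mc C}(S)\ge2$ I pick the defining~$M\in\M{\mc H}\cap\mc C$ with~$V(M)=S$ and~$|M|=\ell_{\mc H,\mc C}(S)-1$, observe~$\mathbbm{1}_M\notin\P{f}$ (otherwise~$M\in\mc H_f=\mc H$, contradicting~$M\in\M{\mc H}$), and apply item~\ref{item:weakest_relaxation1} of Lemma~\ref{lemma:weakest_relaxation} with~$\mc F=\mc H$, which yields exactly the validity of~$x(S)\le|S|-\ell_{\mc H,\mc C}(S)$ on~$\P{f}$.

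It remains to establish condition~\ref{item:downward_representation1}. The inclusion~$\Phiset{\ell_{\mc H,\mc C}}\cap\mc C\subseteq\mc H$ I prove by contraposition: given~$F\in\mc C\setminus\mc H$, downward-closedness of~$\mc H$ and~$\emptyset\in\mc H$ produce a subgraph~$F'\subseteq F$ that is minimal with~$F'\notin\mc H$, hence~$F'\in\M{\mc H}\cap\mc C$ (using~$\mc C$ downward closed), so~$\ell_{\mc H,\mc C}(V(F'))\ge|F'|+1$ and~$F\notin\Phiset{\ell_{\mc H,\mc C}}$. The reverse inclusion~$\mc H\subseteq\Phiset{\ell_{\mc H,\mc C}}\cap\mc C$ is the crux and is where I expect the real work: since~$\mc H$ is downward closed it suffices to show~$|F'|\ge\ell_{\mc H,\mc C}(V(F'))$ for every~$F'\in\mc H$, and this I obtain by rerunning the case analysis of Lemma~\ref{lemma:minimal_necessary}. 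Assuming~$F'\in\mc H$ with~$|F'|\le\ell_{\mc H,\mc C}(V(F'))-1$, I take the defining~$M\in\M{\mc H}\cap\mc C$ with~$V(M)=V(F')$ and~$|M|=\ell_{\mc H,\mc C}(V(F'))-1\ge|F'|$, and show~$\mathbbm{1}_M\in\P{f}$ by splitting on whether~$|V(F')\cap S|$ equals~$0$, equals~$|V(F')|$ (using~$\mathbbm{1}_{F'}\in\P{f}$), or lies strictly between (using minimality of~$M$, so that the restriction of~$M$ to~$S$ lies in~$\mc H=\mc H_f$ and hence in~$\P{f}$); this contradicts~$\mathbbm{1}_M\notin\P{f}$, which holds precisely because~$M\in\mc C\setminus\mc H_f$. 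This relativized version of Lemma~\ref{lemma:minimal_necessary} --- leaning on~$\mc Q$ representing~$\mc C$ to certify~$\mathbbm{1}_M\notin\P{f}$ --- is the main obstacle; everything else is bookkeeping around the reduction to~$\mc H_f=\mc H$.
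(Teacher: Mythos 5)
Your proof is correct, and for most of its length it follows the same route as the paper: your~$\mc{H}_f$ is exactly the family~$\mc{F} \cap \mc{C}$ that the paper works with (where~$\mc{F}$ is the edge-consistent family represented by~$\P{f}$), your backward direction matches the paper's (item~\ref{item:strongest_relaxation1} of Lemma~\ref{lemma:strongest_relaxation} for one inclusion, $\P{f} \subseteq \P{\lb{\mc{H}, \mc{C}}}$ plus condition~\ref{item:downward_representation1} for the other), and your forward-direction proof of condition~\ref{item:downward_representation2} uses item~\ref{item:weakest_relaxation1} of Lemma~\ref{lemma:weakest_relaxation} and item~\ref{item:strongest_relaxation2} of Lemma~\ref{lemma:strongest_relaxation} exactly as the paper does. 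The one genuine divergence is the inclusion~$\mc{H} \subseteq \Phiset{\lb{\mc{H}, \mc{C}}} \cap \mc{C}$: the paper proves Claim~\ref{claim:proof_minimal_subset} ($\M{\mc{H}} \cap \mc{C} \subseteq \M{\mc{F}}$), deduces~$\lb{\mc{H}, \mc{C}} \leq \lb{\mc{F}}$ pointwise, and then invokes Theorem~\ref{thm:characterization1} for the representable family~$\mc{F}$ to conclude~$\mc{H} = \Phiset{\lb{\mc{F}}} \cap \mc{C} \subseteq \Phiset{\lb{\mc{H}, \mc{C}}} \cap \mc{C}$, whereas you rerun the case analysis of Lemma~\ref{lemma:minimal_necessary} relativized to~$\mc{C}$, using~$M \in \mc{C} \setminus \mc{H}_f$ to certify~$\mathbbm{1}_M \notin \P{f}$. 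Both arguments are sound; the paper's is shorter because it recycles Theorem~\ref{thm:characterization1}, while yours is more self-contained. Note, however, that your own ordering makes the step you call the crux redundant: you establish condition~\ref{item:downward_representation2}, i.e.~$\P{f} \subseteq \P{\lb{\mc{H}, \mc{C}}}$, \emph{before} condition~\ref{item:downward_representation1}, and at that point any~$F \in \mc{H} = \mc{H}_f$ satisfies~$\mathbbm{1}_F \in \P{f} \subseteq \P{\lb{\mc{H}, \mc{C}}}$, so your unconditional implication~$\mathbbm{1}_F \in \P{g} \Rightarrow F \in \Phiset{g}$ gives~$F \in \Phiset{\lb{\mc{H}, \mc{C}}} \cap \mc{C}$ in one line --- the relativized case analysis can simply be deleted.
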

\begin{proof}
    Suppose that~$\P{f} \cap \mc{Q}$ represents~$\mc{H}$ and let~$\mc{F}$ be the edge-consistent family of forests represented by~$\P{f}$. Since~$\{\mathbbm{1}_F : F \in \mc{F} \cap \mc{C}\} = \{\mathbbm{1}_F : F \in \mc{H}\}$ and~$\mc{F}$,~$\mc{C}$ and~$\mc{H}$ are all edge-consistent, it follows that~$\mc{H} = \mc{F} \cap \mc{C}$.

    We first prove that~$\Phiset{\ell_{\mc{H}, \mc{C}}} \cap \mc{C} \subseteq \mc{H}$, let~$F$ be a forest in~$\mc{C} \setminus \mc{H}$. Since both~$\mc{C}$ and~$\mc{F}$ are nonempty and downward closed, we have that the empty graph belongs to~$\mc{C} \cap \mc{F} = \mc{H}$. Hence, by downward-closedness of~$\mc{C}$, there exists~$F' \in \M{\mc{H}} \cap \mc{C}$. By the definition of~$\lb{\mc{F}, \mc{C}}$,~$|F'| \leq \lb{\mc{F}, \mc{C}}{V(F')} - 1$, so~$F \notin \Phiset{\ell_{\mc{H}, \mc{C}}}$. To show the other direction of the inclusion, we use the following simple claim.\\
    
    \begin{minipage}{0.9\linewidth}
    \begin{claim}
    \label{claim:proof_minimal_subset}
    $\M{\mc{H}} \cap \mc{C} \subseteq \M{\mc{F}}$.
    \end{claim}
    \begin{proof}
    Let~$F \in \M{\mc{H}} \cap \mc{C}$. Since~$\mc{H} = \mc{F} \cap \mc{C}$, we have~$F \notin \mc{F}$. 
    Moreover, for every proper subgraph~$F' \subsetneq F$, we have~$F' \in \mc{H} \subseteq \mc{F}$. 
    Hence,~$F$ is minimal (with respect to inclusion) among the elements of~$\mc{C}$ that are not in~$\mc{F}$, 
    that is,~$F \in \M{\mc{F}}$.
    \end{proof}
    \end{minipage}\\[0.3cm]

    \noindent
    Claim~\ref{claim:proof_minimal_subset} implies that, for every~$\emptyset \subsetneq S \subseteq V$,~$\lb{\mc{H}, \mc{C}}{S} \leq \lb{\mc{F}}{S}$. Hence, by the definition of~$\Phiset$,~$\mc{H} = \mc{F} \cap \mc{C} = \Phiset{\lb{\mc{F}}} \cap \mc{C} \subseteq \Phiset{\lb{\mc{H}, \mc{C}}} \cap \mc{C}$.

    To show item~\ref{item:downward_representation2}, we observe that Claim~\ref{claim:proof_minimal_subset} also implies that, if~$F \in \M{\mc{H}} \cap \mc{C}$, then~$\mathbbm{1}_F \notin \P{f}$. By item~\ref{item:weakest_relaxation1}, inequality~$x(V(F)) \leq |V(F)| - |F| - 1$ is valid for~$\P{f}$, and therefore,~$\P{f} \subseteq \P{\lb{\mc{H}, \mc{C}}}$. Proving~$\P{\ub{\mc{H}}} \subseteq \P{f}$ is thus immediate from item~\ref{item:strongest_relaxation2} of Lemma~\ref{lemma:strongest_relaxation}.

    Let us now assume that both items~\ref{item:downward_representation1} and~\ref{item:downward_representation2} hold and, again, let~$\mc{F}$ be the edge-consistent forest represented by~$\P{f}$. Our goal is to show that~$\mc{F} \cap \mc{C} = \mc{H}$. Item~\ref{item:strongest_relaxation1} of Lemma~\ref{lemma:strongest_relaxation} gives~$\{\mathbbm{1}_F : F \in \mc{H}\} \subseteq \P{\ub{\mc{H}}} \subseteq \P{f}$, meaning that~$\{\mathbbm{1}_F : F \in \mc{H}\} \subseteq \P{f} \cap \Z^E = \{\mathbbm{1}_F : F \in \mc{F}\}$. Since~$\mc{F}$ is edge-consistent and~$\mc{H} \subseteq \mc{C}$, this implies that~$\mc{H} \subseteq \mc{F} \cap \mc{C}$. To prove the reverse inclusion, let~$F \in \mc{F} \cap \mc{C}$. Since~$\mathbbm{1}_F \in \P{f} \subseteq \P{\lb{\mc{H}, \mc{C}}}$, for every~$\emptyset \subsetneq S \subseteq V$,
    \begin{equation*}
        \mathbbm{1}_F(S) \leq |S| - \lb{\mc{H}, \mc{C}}{S}
        \iff |S| - |E(S) \cap E(F)| \geq \lb{\mc{H}, \mc{C}}{S}.
    \end{equation*}
    In particular, for every~$F' \subseteq F$ and~$S = V(F')$, we have that~$|F'| \geq \lb{\mc{H}, \mc{C}}{V(F')}$. Combining this with item~\ref{item:downward_representation1} we conclude that~$F \in \mc{H} = \Phiset{\lb{\mc{H}, \mc{C}}} \cap \mc{C}$.
\end{proof}

\paragraph{\textbf{Vehicle routing problems and componentwise feasibility.}}

In order to connect Proposition~\ref{prop:characterization3} with the VRPs discussed in Section~\ref{section:setup}, we recall that feasible solutions for these VRPs are composed of routes whose corresponding paths belong to a given family of feasible paths~$\routes$. In this sense, we introduce the following definition.
\begin{definition}
    \label{def:forest_tree}
    For any family of trees~$\mc{T}$ in~$G$, we define
    \begin{equation*}
        \label{def:f_t}
        \mc{F}(\mc{T}) \coloneqq \{F \in \Omega : T \in \mc{T},~\text{for every tree~$T \in F$} \}.
    \end{equation*}
\end{definition}

\noindent
Note that not every representable family of forests can be expressed as in Definition~\ref{def:f_t}. For example, the family of forests in~$G$ containing at most~$t \in \Z_{++}$ edges is not of the form~$\Ft{\mc{T}}$ but it can be represented with the GSEC~$x(V) \leq t$.

It follows directly from Definition~\ref{def:f_t} that we can simplify the formula for~$\lb{\mc{F}, \mc{C}}$ whenever~$\mc{F} = \Ft{\mc{T}}$.
\begin{claim}
    \label{claim:ft_lower_bound}
    Let~$\mc{C}$ be a family of forests and let~$\mc{T}$ be a family of trees in~$G$. Then, for every~$\emptyset \subsetneq S \subseteq V$,
    $$\lb{\Ft{\mc{T}}, \mc{C}}{S} = 1 + \mb{I}(S \in \B{\Ft{\mc{T}}, \mc{C}}).$$
\end{claim}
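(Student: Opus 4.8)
The plan is to reduce the claim to a single structural fact: every minimal infeasible forest with respect to $\Ft{\mc{T}}$ is connected, i.e., consists of a single tree, and hence has $|F| = 1$. Granting this, the stated formula follows immediately by unwinding Definition~\ref{def:lower_bound}. If $S \notin \B{\Ft{\mc{T}}, \mc{C}}$, then by definition $\lb{\Ft{\mc{T}}, \mc{C}}{S} = 1$. If instead $S \in \B{\Ft{\mc{T}}, \mc{C}}$, the maximum appearing in Definition~\ref{def:lower_bound} ranges over forests $F \in \M{\Ft{\mc{T}}} \cap \mc{C}$ with $V(F) = S$, each of which satisfies $|F| = 1$; thus the maximum equals $1$ and $\lb{\Ft{\mc{T}}, \mc{C}}{S} = 2$. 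Both cases are summarized by $1 + \mb{I}(S \in \B{\Ft{\mc{T}}, \mc{C}})$, which is exactly the desired identity.

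To establish the structural fact, I would argue by contradiction. Let $F \in \M{\Ft{\mc{T}}}$ and suppose $|F| \geq 2$. Since $F \notin \Ft{\mc{T}}$, Definition~\ref{def:forest_tree} guarantees a component (tree) $T^\ast$ of $F$ with $T^\ast \notin \mc{T}$. Because $F$ has at least two components, the one-tree forest consisting of $T^\ast$ alone is a \emph{proper} subgraph of $F$ (it omits the vertices and edges of the remaining components). By the minimal infeasibility of $F$, this proper subgraph lies in $\Ft{\mc{T}}$; but its unique component is $T^\ast$, so Definition~\ref{def:forest_tree} forces $T^\ast \in \mc{T}$, contradicting $T^\ast \notin \mc{T}$. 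Hence $|F| = 1$.

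The crux is simply that membership in $\Ft{\mc{T}}$ is a componentwise condition: a forest fails to belong to $\Ft{\mc{T}}$ exactly when one of its tree-components is forbidden, and the smallest witness to such a failure is that forbidden tree on its own. Consequently the connectivity conclusion is a statement about $\M{\Ft{\mc{T}}}$ alone and is unaffected by intersecting with $\mc{C}$, so it applies verbatim to every $F \in \M{\Ft{\mc{T}}} \cap \mc{C}$. I do not expect a genuine obstacle here; the only point requiring minor care is verifying that isolating the forbidden component $T^\ast$ yields a bona fide proper subgraph of $F$ whose single component is precisely $T^\ast$, which is immediate once $|F| \geq 2$.
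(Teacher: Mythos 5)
Your proposal is correct and follows essentially the same route as the paper: both reduce the claim to the fact that every $F \in \M{\Ft{\mc{T}}}$ is a single tree, proved by observing that a forbidden component $T^\ast \notin \mc{T}$ would itself be a proper infeasible subgraph if $F$ had more than one component, contradicting minimality. Your version merely phrases this as an explicit contradiction with $|F| \geq 2$ and spells out the case analysis of Definition~\ref{def:lower_bound}, which the paper compresses into ``by minimality of~$F$, we know that~$F = T$.''
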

\begin{proof}
    It suffices to show that, for every~$S \in \B{\Ft{\mc{T}}, \mc{C}}$,~$\lb{\Ft{\mc{T}}, \mc{C}}{S} = 2$. Fix such a set~$S$ and let~$F \in \M{\Ft{\mc{T}}} \cap \mc{C}$ be such that~$V(F) = S$. Since~$F \notin \Ft{\mc{T}}$, there exists a tree~$T \in F$ such that~$T \notin \mc{T}$. Hence, by minimality of~$F$, we know that~$F = T$ and~$|F| = 1$, as desired.
\end{proof}

As in Ghosal et al.~\cite{ghosal2024}, let us assume that~$\routes$ contains all paths with at most one vertex. Hence, since~$\Ft{\routes}$ contains the empty graph, the minimal infeasible forests with respect to~$\Ft{\routes}$ have exactly one component (as otherwise they would not be minimal). Consider the sets~$\Cpath$ and~$\Qpath$ from Example~\ref{example:vrp}. 
Setting the target family of forests~$\mc{H}$ to~$\Ft{\routes}$ and substituting the definition of the set~$\Phiset{\lb{\mc{H}, \Cpath}}$ into Proposition~\ref{prop:characterization3}, we learn that there exists a RHS function~$\f$ such that~$\P{f} \cap \mc{Q}$ represents~$\Ft{\routes}$ if and only if
\begin{align}
    \Ft{\routes} & = \{F \in \Cpath : |F'| \geq \lb{\mc{F}(\routes), \Cpath}{V(F')},~\forall F' \subseteq F \} \nonumber \\
    & = \{F \in \Cpath : |F'| \geq 1 + \mb{I}(V(F') \in \B{\Ft{\routes}, \Cpath}),~\forall F' \subseteq F, F' \neq \emptyset \} \nonumber \\
    & = \{F \in \Cpath : 1 \geq 1 + \mb{I}(V(T) \in \B{\Ft{\routes}, \Cpath}),~\forall \text{tree~$T \subseteq F, T \neq \emptyset$} \} \nonumber \\
    & = \{F \in \Cpath : V(T) \notin \B{\Ft{\routes}, \Cpath},~\forall \text{tree~$T \subseteq F, T \neq \emptyset$}\}, \label{eq:routes_equivalence1}
\end{align}

\noindent
where the second equality follows from Claim~\ref{claim:ft_lower_bound}.

Therefore,
\begin{align}
    \routes & = \{F \in \Ft{\routes} : |F| \leq 1\} \nonumber \\
    & = \{\text{path~$P \subseteq G$} : V(P') \notin \B{\Ft{\routes}, \Cpath},~\forall \text{subpath~$P' \subseteq P, P' \neq \emptyset$}\},\label{eq:routes_equivalence2}
\end{align}
and note that~$\routes$ satisfies~\eqref{eq:routes_equivalence2} if and only if~$\Ft{\routes}$ satisfies~\eqref{eq:routes_equivalence1}.

Using essentially the same reasoning as that used to prove Lemma~\ref{lemma:phi_lower_bound},
it follows that~$\Ft{\routes}$ satisfies the above equation if and only if~$\routes$ is downward closed and it satisfies the following variant of the minimal infeasibility property:
\begin{itemize}[(I),leftmargin=*, align=left]
    \item[($\star$)] If~$P$ and~$P'$ are two paths in~$G$ with the same set of vertices, then~$P \in \M{\Ft{\routes}}$ implies~$P' \notin \routes$. \label{item:path_star}
\end{itemize}

\begin{claim}
    \label{claim:minimal_infeasible_path}
    Let~$\routes$ be a family of paths in~$G$ containing all paths with at most one vertex. Then~$\routes$ satisfies~\eqref{eq:routes_equivalence2} if and only if~$\routes$ is downward closed and satisfies property \propstar.
\end{claim}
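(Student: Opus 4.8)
The plan is to mirror the proof of Lemma~\ref{lemma:phi_lower_bound}, but carried out directly at the level of paths rather than forests. Write $\mc{A}$ for the family on the right-hand side of~\eqref{eq:routes_equivalence2}, so that the goal is to show $\routes = \mc{A}$ if and only if $\routes$ is downward closed and satisfies~\propstar. The one structural fact I would isolate first is an explicit description of $\B{\Ft{\routes}, \Cpath}$: since $\routes$ contains all paths with at most one vertex, every element of $\M{\Ft{\routes}}$ is connected (as already noted in the text), and intersecting with $\Cpath$ forces such an element to be a single path. Thus $\B{\Ft{\routes}, \Cpath}$ is exactly the collection of vertex sets $V(Q)$ of paths $Q$ that are \emph{minimal infeasible}, i.e., $Q \notin \routes$ while every proper subgraph of $Q$ lies in $\Ft{\routes}$. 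Establishing this correspondence translates membership in $\B{\Ft{\routes}, \Cpath}$ into a statement purely about paths, which is exactly what~\propstar and~\eqref{eq:routes_equivalence2} refer to.

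For the forward direction, I would assume $\routes = \mc{A}$. Downward closedness is then immediate: if $P \in \routes = \mc{A}$ and $P''$ is a subpath of $P$, every subpath of $P''$ is itself a subpath of $P$ and hence has vertex set outside $\B{\Ft{\routes}, \Cpath}$, so $P'' \in \mc{A} = \routes$. For~\propstar, I would take paths $P, P'$ with $V(P) = V(P')$ and $P \in \M{\Ft{\routes}}$; since $P$ is a path it lies in $\M{\Ft{\routes}} \cap \Cpath$, so $V(P) \in \B{\Ft{\routes}, \Cpath}$, and because $V(P') = V(P)$ the path $P'$ has itself as a subpath whose vertex set lies in $\B{\Ft{\routes}, \Cpath}$; hence $P' \notin \mc{A} = \routes$.

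For the converse, I would assume $\routes$ is downward closed and satisfies~\propstar, and prove the two inclusions. For $\routes \subseteq \mc{A}$, I suppose some $P \in \routes$ had a subpath $P'$ with $V(P') \in \B{\Ft{\routes}, \Cpath}$; the structural fact then yields a minimal infeasible path $Q$ with $V(Q) = V(P')$, and~\propstar gives $P' \notin \routes$, contradicting downward closedness (which forces $P' \in \routes$). For $\mc{A} \subseteq \routes$, I suppose $P \in \mc{A}$ but $P \notin \routes$; using downward closedness together with the feasibility of all single-vertex paths, I choose a subpath $Q \subseteq P$ that is minimal (under the subpath order) among subpaths of $P$ not in $\routes$, and then verify $Q \in \M{\Ft{\routes}} \cap \Cpath$, so that $V(Q) \in \B{\Ft{\routes}, \Cpath}$, contradicting $P \in \mc{A}$.

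The step I expect to be the main obstacle is precisely the verification that this minimal subpath $Q$ is a genuine minimal infeasible \emph{forest}, i.e., $Q \in \M{\Ft{\routes}}$. The definition requires every proper \emph{subgraph} of $Q$ to lie in $\Ft{\routes}$, not merely every contiguous subpath; deleting edges or interior vertices of $Q$ produces a disconnected forest whose components are several contiguous subpaths. The key observation is that each such component is a \emph{proper} contiguous subpath of $Q$, and hence lies in $\routes$ by the minimality of $Q$, so the whole subgraph lies in $\Ft{\routes}$. Making this precise—and correctly matching the paper's notion of downward closedness for $\routes$ with the subpath order used here—is where the care is needed; once it is settled, the remaining bookkeeping follows that of Lemma~\ref{lemma:phi_lower_bound} essentially verbatim.
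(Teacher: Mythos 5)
Your proof is correct and takes essentially the same approach as the paper's: the forward direction is identical, and your ``structural fact'' is exactly the paper's Claim~\ref{claim:ft_lower_bound} together with the observation that elements of~$\M{\Ft{\routes}} \cap \Cpath$ are paths. The only (cosmetic) difference is in proving that the right-hand side of~\eqref{eq:routes_equivalence2} is contained in~$\routes$: the paper takes a subgraph of~$P$ that is minimal with respect to~$\Ft{\routes}$-membership and gets connectivity from Claim~\ref{claim:ft_lower_bound}, whereas you take a subpath minimal with respect to~$\routes$-membership and then verify minimal infeasibility as a forest via the component decomposition---the same underlying argument, with the care you flag at the end handled correctly.
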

\begin{proof}
    It is clear that if~\eqref{eq:routes_equivalence2} holds, then~$\routes$ is downward closed. To show that~$\routes$ satisfies \propstar, let~$P$ and~$P'$ be two paths in~$G$ with~$P \in \M{\Ft{\routes}}$ and~$V(P) = V(P')$. Since~$P \in \Cpath$, we know that~$V(P) \in \B{\Ft{\routes}, \Cpath}$, meaning that~$P' \notin \routes$.
    
    To prove the converse, let~$\mc{R}'$ be the set in the RHS of~\eqref{eq:routes_equivalence2}. Suppose that~$P \subseteq G$ is a path that does not belong to~$\routes$ (and thus, to~$\Ft{\routes}$). Since~$\Ft{\routes}$ contains the empty graph and~$\Cpath$ is downward closed, there exists a forest~$P' \subseteq P$ such that~$P' \in \M{\Ft{\routes}} \cap \Cpath$. Moreover, by Claim~\ref{claim:ft_lower_bound},~$P'$ is a path. Hence,~$V(P') \in \B{\Ft{\routes}, \Cpath}$ and~$P$ does not belong to~$\mc{R}'$. 
    
    To show the other side of the inclusion, assume that~$P \in \routes$. By property~\propstar, there exists no path~$P' \in \M{\Ft{\routes}}$ such that~$V(P) = V(P')$. Combining this observation with Claim~\ref{claim:ft_lower_bound} we learn that~$V(P) \notin \B{\Ft{\routes}, \Cpath}$. By downward closedness of~$\Ft{\routes}$, we can repeat the same argument for every subpath~$P''$ of~$P$, proving that~$P \in \mc{R}'$.
\end{proof}

Consequently, we obtain the following result.
\begin{corollary}
\label{corollary:ghosal}
Let~$\routes$ be a family of paths in~$G$ that contains all paths with at most one vertex, is downward closed, and satisfies \propstar. Then there exists a RHS function~$\f$ such that~$\bar{x} \in \R^{E_0}$ is feasible for~$\vrpform{\f}$ if and only if~$\bar{x}$ is the incidence vector of a solution to~$\vrpprob{\routes}$, i.e., there exists a feasible solution~$\mc{S} = \{C_1, \ldots, C_k\}$ for~$\vrpprob{\routes}$ such that, for every~$e \in E_0$,
\begin{equation}
    \label{eq:correspondence_vrp}
    \bar{x}_e = \sum_{i = 1}^k \mb{I}(e \in E(C_i)).
\end{equation}
\end{corollary}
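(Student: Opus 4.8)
The plan is to reduce the claim to the representation result of Proposition~\ref{prop:characterization3} and then carry out the bookkeeping that the depot introduces. First I would invoke Claim~\ref{claim:minimal_infeasible_path}: since~$\routes$ contains all paths with at most one vertex, is downward closed, and satisfies~\propstar, it satisfies~\eqref{eq:routes_equivalence2}; equivalently (as noted immediately after~\eqref{eq:routes_equivalence2}) the family~$\Ft{\routes}$ satisfies~\eqref{eq:routes_equivalence1}. This is exactly condition~\ref{item:downward_representation1} of Proposition~\ref{prop:characterization3} for the instance~$\mc{C} = \Cpath$,~$\mc{Q} = \Qpath$,~$\mc{H} = \Ft{\routes}$. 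I would then verify the hypotheses: $\Cpath$ is nonempty, edge-consistent and downward closed, $\Qpath \subseteq \R^E_+$ represents~$\Cpath$ (Example~\ref{example:vrp}), and~$\Ft{\routes} \subseteq \Cpath$ is edge-consistent (here I use that~$\routes$ contains every single-vertex path, so membership in~$\Ft{\routes}$ depends only on the nontrivial components). Choosing~$f = \lb{\Ft{\routes}, \Cpath}$ makes condition~\ref{item:downward_representation2} hold as well, so Proposition~\ref{prop:characterization3} furnishes a RHS function~$f$ for which~$\P{f} \cap \Qpath$ represents~$\Ft{\routes}$, i.e.~$\P{f} \cap \Qpath \cap \Z^E = \{\mathbbm{1}_F : F \in \Ft{\routes}\}$.

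It then remains to transfer this representation over~$\R^E$ to the full formulation~$\vrpform{f}$ over~$\R^{E_0}$. For the forward direction, let~$\bar x$ be feasible for~$\vrpform{f}$. The edge bounds~$x_e \leq 1$ for~$e \in E$ make~$\bar x|_E$ a~$0/1$ vector, and the GSECs (with~$f \geq 1$) force it to be the incidence vector of a forest~$F$; since each customer has~$G_0$-degree~$2$ by~\eqref{eq:cvrp_customers}, every vertex has degree at most~$2$ in~$F$, so the components of~$F$ are paths and~$F \in \Cpath$, whence~$\mathbbm{1}_F \in \Qpath$. As the GSECs are over~$S \subseteq V$ we have~$\bar x(S) = \mathbbm{1}_F(S)$, so~$\mathbbm{1}_F \in \P{f}$ as well; therefore~$\mathbbm{1}_F \in \P{f} \cap \Qpath \cap \Z^E$ and~$F \in \Ft{\routes}$, i.e.\ every component of~$F$ lies in~$\routes$. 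The degree constraint~\eqref{eq:cvrp_customers} gives~$x_{0v} = 2 - \deg_F(v)$ for each customer~$v$, and summing yields~$x(\delta_{G_0}(0)) = \sum_{v \in V}(2 - \deg_F(v)) = 2|V| - 2|E(F)| = 2(|V| - |E(F)|)$, which is twice the number of components of~$F$ (viewed as a spanning subgraph of~$G$; every customer is covered because~$x(\delta_{G_0}(v)) = 2 > 0$). Hence the depot constraint~\eqref{eq:cvrp_depot} is equivalent to~$F$ having exactly~$k$ components. Connecting each component to the depot at its two endpoints — a single-vertex component~$(v)$ using the edge~$0v$ with value~$2$, which is permitted by~$x_{0v} \leq 2$ — produces~$k$ depot-cycles~$C_1, \ldots, C_k$ whose paths~$P_i = C_i - \{0\}$ lie in~$\routes$ and partition~$V$. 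This is a feasible solution of~$\vrpprob{\routes}$ whose incidence vector is~$\bar x$, as in~\eqref{eq:correspondence_vrp}.

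The converse direction reverses this construction. Starting from a feasible~$\mc{S} = \{C_1, \ldots, C_k\}$ for~$\vrpprob{\routes}$, the forest~$F = \{P_1, \ldots, P_k\}$ lies in~$\Ft{\routes} \cap \Cpath$, so~$\mathbbm{1}_F \in \P{f} \cap \Qpath$; this yields the GSECs, the customer-degree constraints~\eqref{eq:cvrp_customers}, and the edge bounds for the incidence vector~$\bar x$ defined by~\eqref{eq:correspondence_vrp}, while the same depot count gives~$x(\delta_{G_0}(0)) = 2k$, establishing feasibility for~$\vrpform{f}$.

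I expect the only genuine obstacle to be this last translation step, namely verifying that the degree constraints~\eqref{eq:cvrp_customers}--\eqref{eq:cvrp_depot} together with the edge bounds encode exactly ``$\bar x|_E \in \Cpath$ with precisely~$k$ components spanning~$V$'', and in particular getting the accounting of the trivial single-vertex routes right (these are the components on which the doubled depot edge~$x_{0v} = 2$ appears). The combinatorial core — that downward closedness and~\propstar\ are the correct conditions on~$\routes$ — is supplied entirely by Proposition~\ref{prop:characterization3} and Claim~\ref{claim:minimal_infeasible_path}; beyond these, the argument is routine degree bookkeeping at the depot.
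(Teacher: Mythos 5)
Your proof is correct and follows essentially the same route as the paper's: Claim~\ref{claim:minimal_infeasible_path} converts downward closedness and \propstar\ into condition~\ref{item:downward_representation1} of Proposition~\ref{prop:characterization3}, which yields that $\P{f} \cap \Qpath$ represents $\Ft{\routes}$, and the depot degree constraint then selects exactly the forests in $\Ft{\routes}$ with $k$ components. The only difference is one of detail: the paper compresses the final depot/degree bookkeeping (and the nonemptiness of the range of admissible RHS functions in condition~\ref{item:downward_representation2}) into a single assertion, which you carry out explicitly.
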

\begin{proof}
    Suppose that~$\routes$ satisfies the conditions in the statement. By Claim~\ref{claim:minimal_infeasible_path}, $\routes$ satisfies~\eqref{eq:routes_equivalence2}. As~\eqref{eq:routes_equivalence2} is equivalent to~\eqref{eq:routes_equivalence1}, this is also equivalent to~$\Ft{\routes} = \Phiset{\lb{\Ft{\routes}, \Cpath}} \cap \Cpath$. Applying Proposition~\ref{prop:characterization3}, we learn that, for any RHS function~$\f$ such that~$\P{\ub{\Ft{\routes}}} \subseteq \P{\f} \subseteq \P{\lb{\Ft{\routes}, \Cpath}}$, the set~$\P{\f} \cap \Qpath$ represents~$\Ft{\routes}$. Therefore,
    \begin{equation*}
    \left\{x|_{E} \in \R^E :~
    \begin{aligned}
    & x(\delta_{G_0}(0)) = 2k, && \\
    & x|_E \in \P{\f} \cap \Qpath, && \\
    & 0 \leq x_e \leq 1 + \mb{I}(0 \in e), && \forall e \in E_0
    \end{aligned}
    \right\}
    \end{equation*}
    represents~$\{F \in \Ft{\routes} : |F| = k\}$, proving the statement.
\end{proof}

Since property~\propstar~is weaker than vertex-consistency (or permutation invariance, see Definition~\ref{def:vertex_consistency}), Corollary~\ref{corollary:ghosal} concretely establishes that, even when specialized for VRPs, Proposition~\ref{prop:characterization3} generalizes the result of Ghosal et al.~\cite{ghosal2024}.

\section{Subadditive functions and problem applications}
\label{section:applications}

Let~$\mc{F}$ be a family of forests. While the results in Section~\ref{subsection:rhs} establish that the strongest GSEC-based relaxation for~$\mc{F}$ is~$\P{\ub{\mc{F}}}$, computing~$\ub{\mc{F}}{S}$ may be too expensive, as it requires optimizing over~$\mc{F}$. To address this, we introduce here an approach that, under suitable conditions, allows one to easily obtain a RHS function~$\f$ such that~$\P{f} \subseteq \P{\lb{\mc{F}}}$ represents~$\mc{F}$.

Let~$g : 2^{V} \to \R_+$ be such that~$g(S) \leq |S|$, for every~$S \subseteq V$. In this section, we focus on families of forests of the form~$\mc{F}(\Theta(g))$, where
\begin{equation}
    \label{def:theta}
    \Theta(g) \coloneqq \{\text{tree~$T \subseteq G$} :  g(V(T')) \leq 1,~\forall \text{subtree~$T' \subseteq T$} \}.
\end{equation}

\noindent
It follows from Theorem~\ref{thm:characterization1} that, if~$\mc{F}(\mc{T})$ is representable and~$\mc{T}$ contains all trees with no edges, then we can assume without loss of generality that~$\mc{T} = \Thetaset{g}$.
\begin{proposition}
    \label{proposition:representable_f_g}
    Let~$g : 2^{V} \to \R_+$ be such that~$g(S) \leq |S|$, for every~$S \subseteq V$. Then~$\Ft{\Thetaset{g}}$ is representable. Moreover, if~$\mc{T}$ contains all trees with no edges and~$\Ft{\mc{T}}$ is representable, then~$\mc{T} = \Thetaset{\ell_{\Ft{\mc{T}}}}$.
\end{proposition}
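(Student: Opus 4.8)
The plan is to derive both assertions from Theorem~\ref{thm:characterization1}, which says that an edge-consistent family of forests is representable precisely when it equals $\Phiset{\lb{\mc{F}}}$, together with the simplified lower bound from Claim~\ref{claim:ft_lower_bound}. The recurring structural fact I would exploit is that, for any family of the form $\Ft{\mc{T}}$, every minimal infeasible forest is a single tree, so that $\lb{\Ft{\mc{T}}}{S} = 1 + \mb{I}(S \in \B{\Ft{\mc{T}}}) \in \{1,2\}$.

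For the first claim, I would first note that $\Ft{\Thetaset{g}}$ is edge-consistent: the hypothesis $g(S) \le |S|$ gives $g(\{v\}) \le 1$ for every $v$, so every single-vertex tree lies in $\Thetaset{g}$, and adding or deleting isolated vertices never changes membership. The family is nonempty (it contains the empty graph) and downward closed, since any subtree of a tree in $\Thetaset{g}$ is again in $\Thetaset{g}$, and hence any subforest of a member of $\Ft{\Thetaset{g}}$ is again a member. By Lemma~\ref{lemma:phi_lower_bound} it then remains to verify the minimal infeasibility property. Here I would use that a minimal infeasible forest must be a single tree $T$ with $T \notin \Thetaset{g}$ but every proper subtree in $\Thetaset{g}$; minimality forces the subtree witnessing $T \notin \Thetaset{g}$ to be $T$ itself, so $g(V(T)) > 1$. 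The crux is then one line: if some $F \in \Ft{\Thetaset{g}}$ violated the property, by Claim~\ref{claim:ft_lower_bound} it would be a single tree with $V(F) \in \B{\Ft{\Thetaset{g}}}$, forcing $g(V(F)) > 1$, which contradicts $F \in \Thetaset{g}$. Theorem~\ref{thm:characterization1} then gives representability.

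For the second claim, write $\mc{F} \coloneqq \Ft{\mc{T}}$ and $\ell \coloneqq \lb{\mc{F}}$. Since $\mc{T}$ contains all edgeless trees, $\mc{F}$ is edge-consistent, and since it is representable, Theorem~\ref{thm:characterization1} gives $\mc{F} = \Phiset{\ell}$, while Claim~\ref{claim:ft_lower_bound} gives $\ell(S) = 1 + \mb{I}(S \in \B{\mc{F}})$. The key observation is that the defining inequalities of $\Phiset{\ell}$ bind only on single-tree subforests: a subforest $F'$ with at least two components automatically satisfies $|F'| \ge 2 \ge \ell(V(F'))$, whereas for a subtree $T'$ the condition reads $\ell(V(T')) \le 1$. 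Specializing to a single tree $T$, and using that $T \in \mc{F}$ if and only if $T \in \mc{T}$, I would conclude that $T \in \mc{T}$ iff $T \in \Phiset{\ell}$ iff $\ell(V(T')) \le 1$ for every subtree $T' \subseteq T$ iff $T \in \Thetaset{\ell}$. This is exactly $\mc{T} = \Thetaset{\lb{\Ft{\mc{T}}}}$.

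I expect the main obstacle to be the bookkeeping needed to pin down $\M{\Ft{\Thetaset{g}}}$ and to justify $g(V(T)) > 1$ for its members; everything downstream hinges on minimal infeasible forests being single trees, which both caps $\ell$ at $2$ and collapses the minimal infeasibility property to a single comparison of $g$-values. Once that structural fact is secured (via Claim~\ref{claim:ft_lower_bound}), both directions become routine translations between the $\Phiset{\cdot}$-description of forests and the $\Thetaset{\cdot}$-description of their tree components, with edge-consistency the only hypothesis that must be checked before invoking Theorem~\ref{thm:characterization1}.
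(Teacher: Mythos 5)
Your proposal is correct and takes essentially the same approach as the paper: both arguments rest on Claim~\ref{claim:ft_lower_bound} (minimal infeasible forests with respect to $\Ft{\mc{T}}$ are single trees, so the lower bound function takes values in $\{1,2\}$) and then apply Theorem~\ref{thm:characterization1}, identifying the trees of $\Phiset{\lb{\Ft{\mc{T}}}}$ with $\Thetaset{\lb{\Ft{\mc{T}}}}$ for the second claim. The only differences are organizational: you verify the three hypotheses of Lemma~\ref{lemma:phi_lower_bound} where the paper proves $\Ft{\Thetaset{g}} = \Phiset{\lb{\Ft{\Thetaset{g}}}}$ by direct double inclusion, and you make explicit the edge-consistency check that the paper leaves implicit.
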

\begin{proof}
    To ease notation, let~$\mc{F} = \Ft{\Thetaset{g}}$.
    To prove the first part of the statement, it suffices to show that that~$\mc{F} = \Phiset{\lb{\mc{F}}}$. By Claim~\ref{claim:ft_lower_bound}, we may write
    \begin{align*}
        \Phiset{\lb{\mc{F}}} & = \{F \in \allforests : |F'| \geq \lb{\mc{F}}{V(F')},~\forall F' \subseteq F \} \\
        & = \{F \in \allforests : 1 \geq 1 + \mb{I}(V(T) \in \B{\mc{F}}), ~\forall \text{tree~$T \subseteq F, T \neq \emptyset$}\} \\
        & = \{F \in \allforests : V(T) \notin \B{\mc{F}}, ~\forall \text{tree~$T \subseteq F, T \neq \emptyset$}\}
    \end{align*}

    \noindent
    Now, suppose that~$F$ is a forest that does not belong to~$\mc{F}$. Since~$\mc{F}$ contains the empty graph, there exists a subforest~$T \subseteq F$ such that~$T \in \M{\mc{F}}$. As~$V(T) \in \B{\mc{F}}$ and~$T$ is a tree (by minimal infeasibility), it follows that~$F \notin \Phiset{\lb{\mc{F}}}$. 
    
    Conversely, suppose that~$F \in \mc{F}$ and let~$T$ be a nonempty subtree of~$F$. Since~$\mc{F}$ is downward closed, we know that~$T \in \Thetaset{g}$. Therefore,~$g(V(T)) \leq 1$, which implies that~$V(T) \notin \B{\mc{F}}$. Indeed, suppose that there exists a tree~$T' \in \M{\mc{F}}$ such that~$V(T') = V(T)$. Then~$T' \notin \Theta(g)$ and every subtree~$T'' \subsetneq T'$ belongs to~$\Theta(g)$, meaning that~$g(V(T')) > 1$, a contradiction. This shows that~$F \in \Phiset{\lb{\mc{F}}}$.
    
    To close the proof, suppose that~$\Ft{\mc{T}}$ is representable, so~$\Ft{\mc{T}} = \Phiset{\lb{\Ft{\mc{T}}}}$. Then~$$\mc{T} = \{\text{tree~$T \in \Phiset{\lb{\Ft{\mc{T}}}}$}\} = \{\text{tree~$T \subseteq G$} : \lb{\Ft{\mc{T}}}{V(T')} \leq 1,~\forall \text{subtree~$T' \subsetneq T$}\}.$$
\end{proof}

Let~$\f$ be a RHS function and observe that~$\P{f}$ does not necessarily represent~$\Ft{\Thetaset{f}}$. For example, we might have~$\f(V) = |V|$ (so~$\P{f} = \{0\}$) while~$\Thetaset{f}$ contains a tree~$T$ in~$G$ with~$E(T) \neq \emptyset$. One can show, however, that if~$\f$ is a \emph{subadditive set function} --- that is,~$\f{A \cup B} \leq \f{A} + \f{B}$ for every~$A, B \subseteq V$ with~$A \cap B = \emptyset$\,%
\footnote{The standard definition of a subadditive set function requires this inequality to hold for all~$A, B \subseteq V$. In our setting, however, it suffices to consider only disjoint sets.}
--- then~$\P{\f}$ does represent~$\Ft{\Thetaset{\f}}$. Extending this reasoning, we obtain the following result.

\begin{proposition}
    \label{prop:subadditive_rhs_function}
    Let~$g : 2^{V} \to \R_+$ be a subadditive set function such that~$g(S) \leq |S|$, for every~$S \subseteq V$. Let~$\f$ be the RHS function given by~$\f(S) \coloneqq \max\{1, \lceil g(S) \rceil\}$, for all~$\emptyset \subsetneq S \subseteq V$. Then~$\P{\f}$ represents~$\Ft{\Thetaset{g}}$.
\end{proposition}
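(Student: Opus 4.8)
The plan is to apply Theorem~\ref{thm:characterization2}. Writing~$\mc{F} \coloneqq \Ft{\Thetaset{g}}$, Proposition~\ref{proposition:representable_f_g} already establishes that~$\mc{F}$ is representable and, in particular, that~$\mc{F} = \Phiset{\lb{\mc{F}}}$; the same reasoning also shows~$\mc{F}$ is edge-consistent, since~$g(\{v\}) \leq |\{v\}| = 1$ forces every single-vertex tree into~$\Thetaset{g}$, so adjoining isolated vertices preserves membership in~$\mc{F}$. By Theorem~\ref{thm:characterization2}, it then suffices to verify the sandwich~$\P{\ub{\mc{F}}} \subseteq \P{f} \subseteq \P{\lb{\mc{F}}}$, which I would split into the two containments handled below.

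For the inclusion~$\P{f} \subseteq \P{\lb{\mc{F}}}$, I would check that~$f(S) \geq \lb{\mc{F}}{S}$ for every~$\emptyset \subsetneq S \subseteq V$. By Claim~\ref{claim:ft_lower_bound},~$\lb{\mc{F}}{S} = 1 + \mb{I}(S \in \B{\mc{F}})$, so the only nontrivial case is~$S \in \B{\mc{F}}$, where I must show~$f(S) \geq 2$. If~$S \in \B{\mc{F}}$ there is a minimal infeasible tree~$T$ with~$V(T) = S$, and (exactly as in the proof of Proposition~\ref{proposition:representable_f_g}) minimality forces~$g(S) > 1$; hence~$\lceil g(S)\rceil \geq 2$ and~$f(S) = \max\{1,\lceil g(S)\rceil\} \geq 2$, as required.

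The inclusion~$\P{\ub{\mc{F}}} \subseteq \P{f}$ is the crux. By item~\ref{item:strongest_relaxation2} of Lemma~\ref{lemma:strongest_relaxation} it is enough to prove~$\{\mathbbm{1}_F : F \in \mc{F}\} \subseteq \P{f}$, i.e.\ that~$\mathbbm{1}_F(S) \leq |S| - f(S)$ for every~$F \in \mc{F}$ and every~$\emptyset \subsetneq S \subseteq V$. Fix such~$F$ and~$S$, and let~$F'$ be the forest obtained from~$F$ by deleting the vertices outside~$S$, so that~$V(F') = S$ and~$\mathbbm{1}_F(S) = |E(F')| = |S| - |F'|$. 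The claim is thus equivalent to~$|F'| \geq f(S)$, i.e.\ the number of components of~$F'$ is at least~$f(S)$. Each component~$T$ of~$F'$ is a connected subgraph of~$F$, hence a subtree of a single component tree of~$F$; since~$F \in \Ft{\Thetaset{g}}$, that component tree lies in~$\Thetaset{g}$, and the defining condition of~$\Thetaset{g}$ gives~$g(V(T)) \leq 1$. Because the vertex sets of the components of~$F'$ partition~$S$, disjoint subadditivity of~$g$ yields~$g(S) \leq \sum_{T} g(V(T)) \leq |F'|$. As~$|F'|$ is a positive integer, this gives~$|F'| \geq \max\{1, \lceil g(S)\rceil\} = f(S)$, completing the step.

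The main obstacle is precisely this component-counting argument: one must recognize that restricting~$F \in \Ft{\Thetaset{g}}$ to a set~$S$ decomposes it into subtrees whose vertex sets partition~$S$ and each carry~$g$-value at most~$1$, so that subadditivity bounds~$g(S)$ by the number of pieces; everything else is bookkeeping with the results of Section~\ref{subsection:rhs} and Claim~\ref{claim:ft_lower_bound}. I would also remark that Theorem~\ref{thm:characterization2} can be bypassed by a direct argument: the forward inclusion is the subadditivity step above, while for~$\P{f} \cap \Z^E \subseteq \{\mathbbm{1}_F : F \in \mc{F}\}$ one observes that if a forest~$F \notin \mc{F}$ then some subtree~$T$ has~$g(V(T)) > 1$, and taking~$S = V(T)$ violates the GSEC since~$\mathbbm{1}_F(S) \geq |S| - 1 > |S| - f(S)$.
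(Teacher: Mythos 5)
Your proof is correct; it differs from the paper's in scaffolding rather than substance. The paper argues directly: it shows $\P{f} \cap \Z^E = \{\mathbbm{1}_F : F \in \Ft{\Thetaset{g}}\}$ by verifying both inclusions by hand --- for an integer point of $\P{f}$, the GSECs applied to the vertex sets of subtrees force $g(V(T')) \leq 1$, and conversely the validity of every GSEC at $\mathbbm{1}_F$ follows from exactly the partition-plus-subadditivity computation you single out as the crux. You instead route the statement through Theorem~\ref{thm:characterization2}: Proposition~\ref{proposition:representable_f_g} together with Theorem~\ref{thm:characterization1} (using edge-consistency, which you correctly justify by noting that single-vertex trees always lie in $\Thetaset{g}$) gives $\mc{F} = \Phiset{\lb{\mc{F}}}$; Claim~\ref{claim:ft_lower_bound} plus the minimality argument gives $f(S) \geq \lb{\mc{F}}{S}$ and hence $\P{f} \subseteq \P{\lb{\mc{F}}}$; and item~\ref{item:strongest_relaxation2} of Lemma~\ref{lemma:strongest_relaxation} reduces $\P{\ub{\mc{F}}} \subseteq \P{f}$ to checking the GSECs at the points $\mathbbm{1}_F$, $F \in \mc{F}$, which is again the subadditivity step. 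So the two proofs share their only nontrivial computation, and the direct argument you sketch in your closing remark is, almost word for word, the paper's actual proof. What your longer route buys is a sharper conclusion --- it exhibits $f$ inside the sandwich $\P{\ub{\mc{F}}} \subseteq \P{f} \subseteq \P{\lb{\mc{F}}}$, locating it relative to the extremal RHS functions of Section~\ref{subsection:rhs}; what it costs is reliance on machinery that the self-contained proof avoids. One detail where your write-up improves on the paper: the paper's displayed chain ends with $|S| - \lceil g(S) \rceil \leq |S| - f(S)$, which read literally fails in the corner case $\lceil g(S) \rceil = 0$; your explicit use of the fact that $|F'|$ is a positive integer, giving $|F'| \geq \max\{1, \lceil g(S) \rceil\} = f(S)$, closes that gap cleanly.
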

\begin{proof}
    Let~$F \subseteq G$ be a forest such that~$\mathbbm{1}_F \in \P{f_g} \cap \Z^E$. For every subtree~$T \in F$ and subtree~$T' \subseteq T$ with~$|V(T')| \geq 2$, we have that~$\mathbbm{1}_F(V(T')) = |V(T')| - 1 \leq |V(T')| - \f{V(T')}$, which implies that~$g(V(T')) \leq \f{V(T')} \leq 1$. This shows that~$T \in \Thetaset{g}$, and consequently,~$F \in \Ft{\Thetaset{g}}$.
    
    For the converse, suppose that~$F \in \Ft{\Thetaset{g}}$. To show that~$\mathbbm{1}_F \in \P{f}$ satisfy the GSECs, take an arbitrary set~$\emptyset \subsetneq S \subseteq V$ and let~$F'$ be the subgraph of~$F$ induced by~$S$. Since every tree~$T \in F'$ belongs to~$\Thetaset{g}$, we know that~$g(V(T)) \leq 1$. Hence, by subadditivity of~$\lceil g(\,\cdot\,) \rceil$,
    $$\mathbbm{1}_F(S) = |S| - |F'| \leq |S| - \sum_{T \in F'} \lceil g(V(T)) \rceil \leq |S| - \lceil g(S) \rceil \leq |S| - \f{S}.$$
\end{proof}

A particular subclass of subadditive set functions that will be convenient for us are the \emph{XOS} functions~\cite{feige2006maximizing}:
\begin{definition}
    \label{def:xos}
    We say that~$g : 2^{V} \to \R$ is \emph{XOS} with respect to a set~$\mc{W} \subseteq \R^{V}$ if~$g(S) = \max_{w \in \mc{W}} \{w(S)\}$, for all~$\emptyset \subsetneq S \subseteq V$. For convenience, we assume that~$g(\emptyset) = 0$.
\end{definition}
\begin{fact}
    \label{fact:xos}
    Every XOS set function~$g : 2^{V} \to \R$ is subadditive.
\end{fact}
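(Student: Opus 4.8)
The plan is to unfold the definition of an XOS function and exploit the single structural fact that makes it work: each $w \in \mc{W}$ induces a \emph{linear} (hence additive) set function $S \mapsto w(S) = \sum_{i \in S} w(i)$, so it splits exactly across a disjoint union. Concretely, I want to fix disjoint sets $A, B \subseteq V$ and prove $g(A \cup B) \leq g(A) + g(B)$, which is exactly the subadditivity condition required in this paper (the disjoint-sets version). The engine of the argument is that the maximum of a family of additive functions is subadditive.

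For the main case, assume $A$ and $B$ are both nonempty, so that $A \cup B$ is nonempty as well and $g$ is given by the $\max$ over $\mc{W}$ on each of these sets. First I would choose a maximizer $w^\ast \in \mc{W}$ attaining $g(A \cup B) = w^\ast(A \cup B)$. Because $A$ and $B$ are disjoint and $w^\ast(\,\cdot\,)$ is additive over disjoint sets, I can write $w^\ast(A \cup B) = w^\ast(A) + w^\ast(B)$. It then remains to bound each summand by the corresponding maximum: $w^\ast(A) \leq \max_{w \in \mc{W}} w(A) = g(A)$ and likewise $w^\ast(B) \leq g(B)$. Chaining these gives $g(A \cup B) = w^\ast(A) + w^\ast(B) \leq g(A) + g(B)$, as desired.

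Finally I would dispose of the degenerate cases in which one of the two sets is empty. If, say, $B = \emptyset$, then $A \cup B = A$ and, using the convention $g(\emptyset) = 0$ from Definition~\ref{def:xos}, we get $g(A \cup B) = g(A) = g(A) + g(\emptyset)$, so the inequality holds with equality; the case $A = \emptyset$ and the fully degenerate case $A = B = \emptyset$ are identical. I do not expect any real obstacle here: the only subtlety worth flagging is that the empty set is handled by the separate stipulation $g(\emptyset) = 0$ rather than by the $\max$ formula, so the edge cases must be checked by hand rather than folded into the main computation. Everything else is immediate from additivity of the functionals $w(\,\cdot\,)$.
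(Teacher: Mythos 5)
Your proof is correct and follows essentially the same argument as the paper: pick a maximizer $\bar{w} \in \mc{W}$ for $g(A \cup B)$, split it additively over the disjoint sets, and bound each piece by the corresponding maximum. The only difference is your explicit handling of the empty-set cases, which the paper omits; this is a harmless (and slightly more careful) addition, not a different approach.
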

\begin{proof}
    Let~$\mc{W}$ be such that~$g(S) = \max_{w \in \mc{W}} \{w(S)\}$, for all~$\emptyset \subsetneq S \subseteq V$.
    Let~$A, B \subseteq V$ be such that~$A \cap B = \emptyset$. Let~$\bar{w} \in \mc{W}$ be such that~$g(A \cup B) = \bar{w}(A \cup B)$. Then~$$g(A \cup B) = \bar{w}(A) + \bar{w}(B) \leq \max_{w \in \mc{W}}\{w(A)\} + \max_{w \in \mc{W}}\{w(B)\} = g(A) + g(B).$$
\end{proof}

In conclusion, to formulate a family of forests~$\Ft{\mc{T}}$ using GSECs, it suffices to find an XOS function~$g$ such that~$\mc{T} = \Thetaset{g}$ and~$g(S) \leq |S|$, for all~$S \subseteq V$. In what follows, we apply this approach to the bike sharing rebalancing problem and a robust capacitated minimum spanning tree problem. We again emphasize that these formulations cannot be obtained using the framework of Ghosal et al.~\cite{ghosal2024}.

\subsection{Bike sharing rebalancing problem}

Recall the definition of~$\routesbrp$ and~$\fbrp$ from Section~\ref{section:setup}, and let~$\Cpath$ and~$\Qpath$ be set as in Example~\ref{example:vrp}. To show that~$\vrpprob{\routesbrp}$ can be expressed as~$\vrpform{\fbrp}$, we begin with a simple lemma. Although this result was already discussed somewhat informally in~\cite{dell2014bike}, we include the proof for completeness.
\begin{lemma}
    \label{lemma:brp}
    Let~$P = (v_1, \ldots, v_\ell)$ be a path in~$G$. For each~$i \in [\ell]$, define~$D(i) \coloneqq \sum_{j \in [i]} d(v_i)$ (and~$D(0) = 0$). Moreover, define~$D_{\max}(i) \coloneqq \max_{j \in \{0, \ldots, i\}} \{D(j)\}$ and~$D_{\min}(i) \coloneqq \min_{j \in \{0, \ldots, i\}} \{D(j)\}$. The path~$P$ belongs to~$\routesbrp$ if and only if~$D_{\max}(i) - D_{\min}(i) \leq Q$, for all~$i \in [\ell]$. 
\end{lemma}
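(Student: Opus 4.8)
The plan is to recognize that the defining condition of~$\routesbrp$ is nothing but the feasibility of a system of linear inequalities in the single scalar unknown~$q$ (the initial load), and to translate that feasibility into the stated max/min condition. First I would record that, by definition, $P = (v_1, \ldots, v_\ell)$ belongs to~$\routesbrp$ if and only if there is some~$q \in [0,Q] \cap \Q$ with~$0 \leq q + D(i) \leq Q$ for every~$i \in [\ell]$. Since~$D(0) = 0$, the membership~$q \in [0,Q]$ is precisely the~$i = 0$ instance of this constraint, so I can fold it in and say equivalently that~$P \in \routesbrp$ iff the system~$\{0 \leq q + D(i) \leq Q : i \in \{0, 1, \ldots, \ell\}\}$ admits a rational solution~$q$.

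Next I would decouple this system into its lower and upper bounds. The constraint~$q + D(i) \geq 0$ reads~$q \geq -D(i)$, and~$q + D(i) \leq Q$ reads~$q \leq Q - D(i)$. Hence a feasible~$q$ exists if and only if
\[
\max_{0 \leq i \leq \ell} \{-D(i)\} \;\leq\; \min_{0 \leq i \leq \ell}\{Q - D(i)\},
\]
that is,~$-D_{\min}(\ell) \leq Q - D_{\max}(\ell)$, which rearranges to~$D_{\max}(\ell) - D_{\min}(\ell) \leq Q$. Rationality of~$q$ comes for free here: the admissible interval~$[-D_{\min}(\ell),\, Q - D_{\max}(\ell)]$ has rational endpoints, so whenever it is nonempty it contains a rational point.

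Finally I would upgrade the single inequality at~$i = \ell$ to the claimed family of inequalities over all~$i \in [\ell]$. The key observation is that~$i \mapsto D_{\max}(i)$ is non-decreasing and~$i \mapsto D_{\min}(i)$ is non-increasing, so~$i \mapsto D_{\max}(i) - D_{\min}(i)$ is non-decreasing; therefore~$D_{\max}(i) - D_{\min}(i) \leq Q$ holds for every~$i \in [\ell]$ exactly when it holds at~$i = \ell$. Chaining this equivalence with the previous step yields the lemma.

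The argument is essentially a one-variable LP-feasibility check, so no step is a genuine obstacle; the only points demanding care are the bookkeeping that the explicit bound~$q \in [0,Q]$ is subsumed by the~$i = 0$ constraint (using~$D_{\min}(\ell) \leq 0 \leq D_{\max}(\ell)$, since~$D(0) = 0$ lies in the range defining both extrema), and the monotonicity remark that lets me pass from the single index~$i = \ell$ to the full family~$i \in [\ell]$.
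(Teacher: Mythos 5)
Your proposal is correct and is essentially the paper's own argument: the paper's converse witness $q = -D_{\min}(\ell)$ is exactly the left endpoint of your admissible interval $[-D_{\min}(\ell),\, Q - D_{\max}(\ell)]$, and its forward direction amounts to your interval-nonemptiness inequality. The only cosmetic differences are that you phrase everything as one-variable LP feasibility and reduce to $i = \ell$ via monotonicity of $i \mapsto D_{\max}(i) - D_{\min}(i)$, whereas the paper verifies the inequality for each $i$ directly and checks the explicit witness; both hinge on the same algebra.
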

\begin{proof}
    Suppose that~$P \in \routesbrp$, meaning that there exists~$q$ such that~$0 \leq q + D(i) \leq Q$, for all~$i \in [\ell]$. Fix~$i \in [\ell]$ and note that~$q + D_{\max}(i) \leq Q$ and~$q + D_{\min}(i) \geq 0$. Therefore,~$q \geq - D_{\min}(i)$ and~$D_{\max}(i) - D_{\min}(i) \leq Q$.

    For the converse, assume that~$D_{\max}(i) - D_{\min}(i) \leq Q$, for all~$i \in [\ell]$. Set~$q = - D_{\min}(\ell)$ and observe that~$q + D(i) \geq 0$, for all~$i \in [\ell]$. Moreover,
    $$q + D(i) = D(i) - D_{\min}(\ell) \leq D_{\max}(\ell) - D_{\min}(\ell) \leq Q.$$
\end{proof}

Using Lemma~\ref{lemma:brp}, we obtain the following characterization of BRP-feasible paths.
\begin{proposition}
    \label{prop:brp}
    Let~$P = (v_1, \ldots, v_\ell)$ be a path in~$G$. Then~$P \in \routesbrp$ if and only if~$|\sum_{p = i}^j d_{v_p}| \leq Q$, for every~$0 < i \leq j \leq \ell$.
\end{proposition}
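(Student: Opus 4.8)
The plan is to prove Proposition~\ref{prop:brp} by reducing it to Lemma~\ref{lemma:brp}, which already characterizes membership in \routesbrp{} in terms of the running prefix sums $D(i)$. The key observation is that the quantity $\sum_{p=i}^{j} d_{v_p}$ appearing in the proposition is exactly the \emph{contiguous-segment sum} $D(j) - D(i-1)$, so the condition ``$|\sum_{p=i}^{j} d_{v_p}| \leq Q$ for all $0 < i \leq j \leq \ell$'' is equivalent to saying that the maximal spread among all pairs of prefix sums is at most $Q$. I would therefore aim to show that
\[
\max_{0 < i \le j \le \ell} \Bigl| \sum_{p=i}^{j} d_{v_p} \Bigr| \;=\; \max_{0 \le a, b \le \ell} \bigl| D(b) - D(a) \bigr| \;=\; D_{\max}(\ell) - D_{\min}(\ell),
\]
and then invoke Lemma~\ref{lemma:brp}.

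First I would rewrite each segment sum as a difference of prefix sums: for $0 < i \le j \le \ell$, $\sum_{p=i}^{j} d_{v_p} = D(j) - D(i-1)$. As $(i,j)$ ranges over all admissible pairs, the index $i-1$ ranges over $\{0, \ldots, \ell-1\}$ and $j$ over $\{1,\ldots,\ell\}$; together with the trivial empty-segment difference $D(a)-D(a)=0$, the set of values $\{D(j)-D(i-1)\}$ coincides with all differences $\{D(b)-D(a) : 0 \le a,b \le \ell\}$. Hence the condition $|\sum_{p=i}^j d_{v_p}| \le Q$ for all valid $i,j$ is equivalent to $|D(b)-D(a)| \le Q$ for all $a,b \in \{0,\ldots,\ell\}$.

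Next I would show that the largest such absolute difference over all pairs equals the global spread $D_{\max}(\ell) - D_{\min}(\ell)$. The inequality $|D(b)-D(a)| \le D_{\max}(\ell)-D_{\min}(\ell)$ is immediate since $D_{\min}(\ell) \le D(a), D(b) \le D_{\max}(\ell)$; conversely, choosing $a,b$ to be indices achieving the minimum and maximum prefix sums yields a difference of exactly $D_{\max}(\ell)-D_{\min}(\ell)$. Therefore ``$|D(b)-D(a)| \le Q$ for all $a,b$'' holds if and only if $D_{\max}(\ell)-D_{\min}(\ell) \le Q$.

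Finally I would close the loop with Lemma~\ref{lemma:brp}. That lemma states $P \in \routesbrp$ iff $D_{\max}(i)-D_{\min}(i) \le Q$ for all $i \in [\ell]$. Since $D_{\max}(i)$ is nondecreasing and $D_{\min}(i)$ is nonincreasing in $i$, the map $i \mapsto D_{\max}(i)-D_{\min}(i)$ is nondecreasing, so the full family of constraints is equivalent to the single constraint at $i=\ell$, namely $D_{\max}(\ell)-D_{\min}(\ell) \le Q$. Combining this with the previous paragraph gives the claimed equivalence. I do not expect a serious obstacle here; the only point requiring minor care is the bookkeeping over index ranges when translating segment sums into prefix-sum differences (ensuring the endpoints $a=0$ and the empty segment are handled so that the two maxima genuinely coincide), but this is routine once the prefix-sum reformulation is in place.
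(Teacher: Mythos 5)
Your proof is correct and takes essentially the same route as the paper's: both rest on Lemma~\ref{lemma:brp} together with the identity $\sum_{p=i}^{j} d_{v_p} = D(j) - D(i-1)$ relating segment sums to prefix sums. The differences are organizational rather than conceptual --- your monotonicity observation collapses the lemma's conditions to the single index $i=\ell$, and the spread identity $\max_{a,b}\lvert D(b)-D(a)\rvert = D_{\max}(\ell)-D_{\min}(\ell)$ absorbs the paper's two-case analysis on $j_{\max}$ versus $j_{\min}$ (and its separate forward argument from the definition of~$\routesbrp$) into one symmetric chain of equivalences.
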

\begin{proof}
    Let~$D, D_{\max}$ and~$D_{\min}$ be as in the statement of Lemma~\ref{lemma:brp}.
    To prove the forward direction, assume that~$0 \leq q \leq Q$ is such that~$0 \leq q + D(j) \leq Q$, for all~$j \in [\ell]$. Then
    \begin{align*}
        & 0 \leq q + D(j) \leq Q \\
        \iff & 0 \leq q + D(i - 1) + (D(j) - D(i - 1)) \leq Q \\
        \iff & -q - D(i - 1) \leq D(j) - D(i - 1) \leq Q - q - D(i - 1) \\
        \implies & -Q \leq D(j) - D(i - 1) \leq Q,
    \end{align*}
    where the last line follows from~$0 \leq q + D(i - 1) \leq Q$ (recall that~$D(0) = 0$). We are thus done by the fact that~$D(j) - D(i - 1) = \sum_{p = i}^j d_{v_p}$.

    For the converse, we fix~$i \in [\ell]$ and we show that~$D_{\max}(i) - D_{\min}(i) \leq Q$ (by Lemma~\ref{lemma:brp}). Let~$j_{\max}$ and~$j_{\min}$ be such that~$D_{\max}(i) = D(j_{\max})$ and~$D_{\min}(i) = D(j_{\min})$. Suppose first that~$j_{\max} > j_{\min}$. Since~$|d(\{v_{j_{\min} + 1}, \ldots, v_{j_{\max}}\}) | \leq Q$, it follows that~$$D_{\max}(i) - D_{\min}(i) = D(j_{\max}) -  D(j_{\min}) = d((v_{j_{\min} + 1}, \ldots, v_{j_{\max}})) \leq Q.$$ On the other hand, if~$j_{\max} < j_{\min}$, we know that~$|d(\{v_{j_{\max} + 1}, \ldots, v_{j_{\min}}\}) | \leq Q$, meaning that~$$D_{\min}(i) - D_{\max}(i) = D(j_{\min}) -  D(j_{\max}) = d((v_{j_{\max} + 1}, \ldots, v_{j_{\min}})) \geq -Q.$$
\end{proof}

Now, for each~$\emptyset \subsetneq S \subseteq V$, define the XOS function~$$g_{\tsc{brp}}(S) \coloneqq \max \{d(S) / Q, -d(S) / Q \}.$$ By Proposition~\ref{prop:brp}, we have that~$\routesbrp = \Thetaset{g_{\tsc{brp}}} \cap \Cpath$. Moreover, Proposition~\ref{prop:subadditive_rhs_function} implies that~$\P{\fbrp}$ represents~$\Ft{\Thetaset{g_{\tsc{brp}}}}$, meaning that~$\P{\fbrp} \cap \Qpath$ represents~$$\Ft{\Thetaset{g_{\tsc{brp}}}} \cap \Cpath = \Ft{\Thetaset{g_{\tsc{brp}}} \cap \Cpath} = \Ft{\routesbrp}.$$ 

Therefore, every feasible solution~$\bar{x}$ to formulation~$\vrpform{\fbrp}$ corresponds to a feasible solution~$\mc{S} = \{C_1, \ldots, C_k\}$ for problem~$\vrpprob{\routesbrp}$ (where the ``correspondence'' is in the sense of Equation~\eqref{eq:correspondence_vrp}).

\subsection{Robust capacitated minimum spanning tree problem}

As in Section~\ref{section:setup}, let~$G_0 = (V_0, E_0)$ be a connected undirected graph with~$V_0 = \{0\} \dot\cup V$ and~$E_0 = \{0v : v \in V\} \dot\cup E$. Set~$G = G_0 - \{0\}$ and let $Q \in \Q_+$ be a capacity value. In the CMST~\cite{hall1992polyhedral, hall1996, uchoa2008robust}, each vertex~$v \in V$ has a demand~$d_v \in [0, Q] \cap \Q_+$, and the goal is to find a spanning tree~$T$ of~$G$, rooted at~$0$, and such that the total demand of each subtree~$T'$ hanging from~$0$ does not exceed $Q$. 

Inspired by previous work on the robust CVRP~\cite{gounaris2013robust, subramanyam2020robust, pessoa2021branch}, we now introduce the robust CMST (RCMST), where, instead of assuming that~$d \in \Q^V_+$ is deterministic, we only know that~$d$ belongs to a given \emph{uncertainty set} \hypertarget{def:uncertainty_set}{$\mc{U} \subseteq \R^V_+$}. The subtrees~$T'$ rooted at a child of~$0$ must then satisfy the robust capacity constraint~$\max_{d \in \U}\{d(V(T')\} \leq Q$.

For each~$\emptyset \subsetneq S \subseteq V$, define the XOS function~$$g_{\tsc{rcmst}}(S) \coloneqq \max_{d \in \U}\left\{d(S) / Q\right\}.$$ The set of trees in~$G$ satisfying the robust capacity constraints is given by~$\Thetaset{g_{\tsc{rcmst}}}$. Therefore, defining~$f_{\tsc{rcmst}}(\,\cdot\,) \coloneqq \max\{1, \lceil g_{\tsc{rcmst}}(\,\cdot\,) \rceil\}$, we have that~$\P{f_{\tsc{rcmst}}}$ represents~$\mc{F}_{\tsc{rcmst}} = \Ft{\Thetaset{g_{\tsc{rcmst}}}}$. In this way, we can formulate the RCMST as
\begin{subequations}
\label{form:rcmstp}
\begin{align}
    \min~~ & c^\T x \\
    \text{s.t.~~} & x(V_0) = |V|, \\
    & x(\{0\} \cup S) \leq |S|, & \forall \emptyset \subsetneq S \subseteq V, \label{ineq:rcmst_depot} \\
    & x|_E \in \P{f_{\tsc{rcmst}} ; G}, \\
    & x \in \Z^E,
\end{align}
\end{subequations}
where constraints~\eqref{ineq:rcmst_depot} enforce the subtour elimination constraints for subsets of vertices containing the depot.

When~$\U$ is a singleton, Formulation~\eqref{form:rcmstp} reduces to the CMST formulation of Hall~\cite{hall1996}. For budgeted and factor model uncertainty sets,~$g_{\tsc{rcmst}}(S)$ can be computed efficiently using the analytical solutions of Gounaris et al.~\cite{gounaris2013robust}.

\section{Concluding remarks}

In this work, we presented the first characterization of the families of forests that can be represented with GSECs. Building on this result, we generalized the framework of Ghosal et al.~\cite{ghosal2024} for VRPs and derived a GSEC-based formulation for the robust capacitated minimum spanning tree (RCMST) problem. Beyond extending previous results, our findings demonstrate the versatility of GSECs as a modeling tool for a broad class of network design and vehicle routing problems. An interesting direction for future research is to investigate the computational performance of our proposed formulation for the RCMST under different choices of uncertainty sets.

\bibliographystyle{abbrv}
\bibliography{bibliography}

\end{document}